\title{Radial restriction of spherical functions on supergroups}
\author{Mitra Mansouri \and Hadi Salmasian}
\address{Department of Mathematics and Statistics,
University of Ottawa,
STEM Complex,
150 Louis-Pasteur Pvt,
Ottawa, ON,
Canada K1N 6N5}
\email{hsalmasi@uottawa.ca}
\address{Department of Mathematics and Statistics,
University of Ottawa,
STEM Complex,
150 Louis-Pasteur Pvt,
Ottawa, ON,
Canada K1N 6N5}
\email{mmans053@uottawa.ca}
\date{\today}
\begin{document}

\begin{abstract}
Using the Hopf superalgebra structure of the enveloping algebra $U(\g g)$ of a Lie superalgebra $\g g=\mathrm{Lie}(G)$, we  give a purely algebraic treatment of $K$-bi-invariant functions on a Lie supergroup $G$, where $K$ is a sub-supergroup of $G$. 
We realize $K$-bi-invariant functions as a  subalgebra $\cA(\g g,\g k)$ of the dual of $U(\g g)$ whose elements vanish on the coideal $\cI=\g kU(\g g)+U(\g g)\g k$, where $\g k=\mathrm{Lie}(K)$.  
Next, for a general class of supersymmetric pairs $(\g g,\g k)$, we define the radial restriction of elements of $\cA(\g g,\g k)$ and  prove that it is an injection into $S(\g a)^*$, where $\g a$ is the Cartan subspace of $(\g g,\g k)$.  
Finally, we compute a basis for $\cI$ in the case of the pair
$(\g{gl}(1|2), \g{osp}(1|2))$, and uncover a connection with the Bernoulli and Euler zigzag numbers.

\end{abstract}

\maketitle

\section{Introduction}

A spherical function on a real reductive Lie group $G$ is a  $K$-bi-invariant
eigenfunction of the algebra of invariant differential operators on $G/K$, where $K$ is a maximal compact subgroup of $G$. The theory of spherical functions is extensive and has profound connections with  representation theory and combinatorics of symmetric functions \cite{helgason2022groups,Macdonald,Gangolli}. 
However, in the context of  supergroups this theory is far less developed. This is partly due to the cumbersome technicalities that arise in general in superanalysis and supergeometry, and create  major hurdles in extending the  tools of harmonic analysis to the super setting. 

To circumvent the above technical difficulty, we pursue an indirect, purely algebraic approach to study spherical functions on supersymmetric spaces.  This approach was first explored by Sergeev \cite{sergeev2001superanalogs} for two families of symmetric pairs $(\g{gl}(m|n)\oplus \g{gl}(m|n),\g{gl}(m|n))$ and $(\g{gl}(m|2n),\g{osp}(m|2n))$. 
An extended, yet unpublished, version of~\cite{ sergeev2001superanalogs}
is available in 
\cite{SergeevPreprint}.
However, Sergeev addressed each of the above families by a different and explicit method. From this viewpoint, the papers   
\cite{sergeev2001superanalogs,SergeevPreprint} use ad hoc methods.

In this paper, we expand on the idea of Sergeev in the general setting of supersymmetric pairs $(\g g,\g k)$ of Lie superalgebras. Let $\cA(\g g,\g k)$ denote the dual of the quotient
$U(\g g)/\cI$ where \[
\cI:=\g kU(\g g)+U(\g g)\g k.
\] Since $\cI$ is a coideal, $\cA(\g g,\g k)$ is a (super)commutative algebra. Elements of $\cA(\g g,\g k)$ play the role of $K$-bi-invariant functions on $G$. Of course the Lie algebra $\g g$ does not contain the information about isogeny of $G$.  But  
in some sense $\cA(\g g,\g k)$ contains all analytic spherical functions on all isogenous pairs $(G,K)$ associated to $(\g g,\g k)$, in the same way that $U(\g g)^*$ contains all analytic functions  
on the isogeny class of $G$
 in the non-super setting (where we identify functions by their derivatives at  identity). 
Our main result is Theorem~\ref{injection}, which states that $K$-bi-invariant functions are uniquely determined by their radial restriction. This is a variant of a well-known fact 
from the purely even case (see Remark~\ref{rmk:GK}), and leads to interesting problems that we aim to explore in the future. 
Theorem~\ref{injection} was also proved in~\cite[Theorem 4.1]{SergeevVeselovSym} for several families of pairs $(\g g,\g k)$, under the assumption that $\g g$ has a non-degenerate invariant form that restricts to a non-degenerate form on the Cartan subspace $\g a$ of $\g g$. Our proof has a small advantage over the one given in~\cite{SergeevVeselovSym}: it does not use the invariant form of $\g g$, and therefore it applies to a larger class of pairs $(\g g,\g k)$; see Example~\ref{exa-q}.

We conclude this paper by looking more closely at  the supersymmetric pair $(\mathfrak{gl}(1|2), \mathfrak{osp}(1|2))$. We compute a basis of $\mathcal{I}=\g k U(\g g)+U(\g g)\g k$ in Theorem \ref{basis of I}. The interesting observation is that  Bernoulli  numbers occur as coefficients in our basis. Along the way, we also come across the Euler zigzag numbers. \\

\noindent\textbf{Acknowledgement.} We thank the anonymous referee for drawing our attention to the work of Sergeev and Veselov in~\cite{SergeevVeselovSym}. This paper is based on the doctoral dissertation of the first author, which was completed under supervision of the second author. Certain aspects of this project were refined through  discussions with 
the participants of the AIM SQuaREs meetings on 
Symmetric spaces and Capelli operators for Lie superalgebras.  
 The research of the second author is partially supported by NSERC Discovery Grant RGPIN-2024-04030.

\section{The algebra $\cA(\g g,\g k)$}\label{chapter1}

Throughout this paper, the base field will be $\mathbb{C}$.
Let $\g g$ be any Lie superalgebra. Then the universal enveloping algebra $U(\g g)$ is a Hopf superalgebra, with co-product $\Delta:U(\g g)\to U(\g g)\otimes U(\g g)$ defined by $\Delta(x):=1\otimes x+x\otimes 1$ for $x\in \g g$, antipode defined by $S(x)=-x$ for $x\in \g g$, and counit defined by $\eps(1)=1$ and $\eps(x)=0$ for $x\in \g g$.
Using the comultiplication of $U(\g g)$, we can equip the dual  $U(\g g)^*$ with an associative algebra structure. The multiplication of $U(\g g)^*$ is given by
\[
\phi\psi(x):=\sum (-1)^{|x_1||\psi|} \phi(x_1)\psi(x_2)\text{ for $x\in U(\g g)$,}
\] 
where $\Delta(x)=\sum x_1\otimes x_2$ and as usual $|\cdot|$ denotes  parity.

\begin{rmk}
Let $H$ be a Hopf (super)algebra. An element of $H^*$ is said to be of \emph{finite type} if its kernel contains a two-sided ideal of $H$ of finite codimension. The elements of finite type of $H^*$ form a Hopf
algebra (see~\cite[Ch. VI]{sweedler1969hopf}), which we will denote by $H^\circ$.  
\end{rmk}

Let $(\pi,V)$ be a finite dimensional $\g g$-module. Recall that the matrix coefficients of $(\pi,V)$ are linear maps $\phi_{v,v^*}\in U(\g g)^*$ defined by 
\begin{equation}
\label{phivv*eq}
\phi_{v,v^*}(x):=\lag v^*,\pi(x)v\rag
\text{ for all }x\in U(\g g),\ v\in V,\ v^*\in V^*.
\end{equation}
If $\mathrm{Ann}(V)$ denotes the annihilator
of a $\g g$-module module $V$, then $U(\g g)/\mathrm{Ann}(V)$ embeds in 
$\mathrm{End}_{\mathbb{C}}(V)$, hence
 $\phi_{v,v^*}\in U(\g g)^\circ$. Indeed the converse also holds and is straightforward to prove:  elements of $U(\g g)^\circ$ are  
 matrix coefficients of finite dimensional $\g g$-modules.


Let $\g k\sseq \g g$ be any subalgebra of $\g g$ and set $\cI:=\g kU(\g g)+U(\g g)\g k$. We say $\la\in U(\g g)^*$ is  \emph{$\g k$-bi-invariant}  if 
$
\cI\sseq\ker(\lambda)
$.
Since $\g k$ is $\Z_2$-graded, the $\g k$-bi-invariant $\lambda\in U(\g g)^*$ form a graded subspace which we denote by $\cA(\g g ,\g k)$.

\begin{lem}
The superspace $\cA(\g g,\g k)$ is a subalgebra of $U(\g g)^*$. 
\end{lem}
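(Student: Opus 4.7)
The plan is to exploit the fact that $\cI$ is a two-sided coideal of $U(\g g)$, which is essentially the content of the introduction's remark about $\cA(\g g,\g k)$ being (super)commutative. More precisely, I will first verify the coideal property
\[
\Delta(\cI)\sseq \cI\otimes U(\g g)+U(\g g)\otimes \cI,
\]
and then use it to deduce directly that $\cA(\g g,\g k)$ is closed under the convolution product on $U(\g g)^*$.

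To establish the coideal property, note that $\cI$ is spanned by elements of the form $xu$ and $ux$ with $x\in\g k$ and $u\in U(\g g)$. Since $\Delta$ is a superalgebra homomorphism and $\Delta(x)=1\otimes x+x\otimes 1$ for $x\in\g g$, writing $\Delta(u)=\sum u_1\otimes u_2$ we obtain
\[
\Delta(xu)=(1\otimes x+x\otimes 1)\sum u_1\otimes u_2=\sum(-1)^{|x||u_1|}u_1\otimes xu_2+\sum xu_1\otimes u_2,
\]
which manifestly lies in $U(\g g)\otimes \cI+\cI\otimes U(\g g)$. The computation for $ux$ is analogous, so $\Delta(\cI)\sseq \cI\otimes U(\g g)+U(\g g)\otimes \cI$ as claimed.

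Now suppose $\phi,\psi\in\cA(\g g,\g k)$, i.e.\ both $\phi$ and $\psi$ vanish on $\cI$. For any $z\in\cI$, write $\Delta(z)=\sum z_1\otimes z_2$. By the coideal property, every summand has either $z_1\in\cI$ or $z_2\in\cI$, so $\phi(z_1)\psi(z_2)=0$ for every summand. Hence
\[
(\phi\psi)(z)=\sum(-1)^{|z_1||\psi|}\phi(z_1)\psi(z_2)=0,
\]
showing that $\phi\psi\in\cA(\g g,\g k)$. The unit of $U(\g g)^*$ is the counit $\eps$, which annihilates $\g g$ and hence all of $\cI$, so it also lies in $\cA(\g g,\g k)$. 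The $\Z_2$-grading on $\cA(\g g,\g k)$ is automatic because the grading on $U(\g g)^*$ restricts to any graded subspace, and $\cI$ is itself graded since $\g k$ is.

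The only step requiring any care is verifying the coideal property of $\cI$, but this is essentially a one-line check since $\Delta$ is multiplicative and the generators of $\cI$ have the simple form $xu$ or $ux$ with $x\in\g k$; I do not anticipate any genuine obstacle.
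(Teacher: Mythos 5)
Your proof is correct and takes essentially the same approach as the paper, which itself notes the lemma ``is immediate from the fact that $\cI$ is a coideal'' before writing out the computation on generators $ku$; you simply package that computation as a standalone verification of the coideal property and then apply $\phi\otimes\psi$. One small caution: the phrase ``every summand has either $z_1\in\cI$ or $z_2\in\cI$'' is a loose use of Sweedler notation (the decomposition is not canonical), but the underlying argument is sound since $\Delta(z)\in\cI\otimes U(\g g)+U(\g g)\otimes\cI$ and the bilinear functional $\phi\otimes\psi$ vanishes on both summands of that subspace.
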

\begin{proof}
This is immediate from the fact that $\cI$ is a coideal of $U(\g g)$, but we provide more details. Given $ \lambda, \mu \in \mathcal A(\g g,\g k) $, we want to prove that $ \lambda\mu\in \mathcal A(\g g,\g k) $. For $ k\in \g k $ and $ u\in U(\g g) $, if $\Delta(u)=\sum u_1\otimes u_2$ in Sweedler's notation then 
\begin{align*}
\lambda \mu (ku)=\lambda\otimes \mu ( (&k\otimes 1+1\otimes k)\Delta(u) )=\lambda\otimes \mu \left( \left(k\otimes 1+1\otimes k\right)\sum u_1 \otimes u_2\right)\\&=\sum \left(
(-1)^{|\mu|\cdot(|k|+|u_1|)}
\lambda \left(ku_1\right)\otimes \mu \left(u_2\right)+(-1)^{(|k|+|\mu|) |u_1|
}\lambda \left(u_1\right)\otimes \mu \left(ku_2\right)\right)=0,
\end{align*}
hence $\g kU(\g g)\sseq \ker(\lambda\mu)$. Similarly, 
$U(\g g)\g k\sseq \ker (\lambda\mu)$, so that 
$\lambda\mu\in \mathcal A(\g g,\g k)$. 
\end{proof}
The superalgebra $U(\g g)$ acts by left and right translation on itself  as follows:
\begin{equation}
\label{eq:LxRxdf} L_x:y\mapsto xy \quad \text{and}\quad R_x:y\mapsto (-1)^{|x|.|y|}yx.
\end{equation}
Now set 
\[
\cA(\g g,\g k)^\circ:=\cA(\g g, \g k)\cap U(\g g)^\circ.
\] 
The justification that $\cA(\g g,\g k)^\circ$ plays the role of spherical functions for $(\g g,\g k)$ is the following proposition. 
\begin{prp}\label{rols-of-spherical}
	Let $\la\in 
U(\g g)^\circ$. Then $\lambda\in	
	\cA(\g g,\g k)^\circ$ if and only if  
	$\la(x)=\phi_{v,v^*}$ for $v\in V$ and $v^*\in V^*$, where $V$ is a finite dimensional $\g g$-module, $V^*$ is the dual of $V$, and both $v$ and $v^*$ are $\g k$-fixed. 
\end{prp}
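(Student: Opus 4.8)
The plan is to prove both directions by working with the correspondence between $U(\g g)^\circ$ and matrix coefficients of finite-dimensional $\g g$-modules, and then pin down what the $\g k$-bi-invariance condition $\cI \sseq \ker(\la)$ translates to on the module side.

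For the ``if'' direction, suppose $\la = \phi_{v,v^*}$ with $V$ finite dimensional and $v \in V^{\g k}$, $v^* \in (V^*)^{\g k}$. We already know $\phi_{v,v^*} \in U(\g g)^\circ$ from the discussion preceding the proposition (since $U(\g g)/\mathrm{Ann}(V)$ embeds in $\End_\C(V)$). It remains to check $\cI \sseq \ker(\la)$. For $k \in \g k$ and $u \in U(\g g)$ we compute $\phi_{v,v^*}(ku) = \lag v^*, \pi(k)\pi(u)v\rag = \pm\lag \pi(k)^{\mathrm{t}}v^*, \pi(u)v\rag$; since $v^*$ is $\g k$-fixed, $\pi(k)$ acts as zero on $v^*$ in the dual module (here one uses that the $\g k$-action on $V^*$ is the one for which $\g k$-fixedness of $v^*$ is defined — i.e. $\lag k\cdot v^*, w\rag = -(-1)^{|k||v^*|}\lag v^*, k\cdot w\rag$), so this vanishes. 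Hence $\g k U(\g g) \sseq \ker(\la)$, and symmetrically $\phi_{v,v^*}(uk) = \pm\lag v^*, \pi(u)\pi(k)v\rag = 0$ because $v$ is $\g k$-fixed, so $U(\g g)\g k \sseq \ker(\la)$. Thus $\la \in \cA(\g g,\g k)^\circ$.

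For the ``only if'' direction, let $\la \in \cA(\g g,\g k)^\circ$. Since $\la \in U(\g g)^\circ$, it is a matrix coefficient of \emph{some} finite-dimensional module: there exist $V$, $w \in V$, $w^* \in V^*$ with $\la = \phi_{w,w^*}$; concretely, one can take $V = U(\g g)/J$ for a cofinite two-sided ideal $J \sseq \ker(\la)$, with $w$ the image of $1$ and $w^* = \la$ viewed on the quotient. The point is then to exploit $\g k$-bi-invariance to arrange that the chosen vectors are $\g k$-fixed. With this model, $\la \in \cA(\g g,\g k)$ says precisely that $\la$ vanishes on $\g k U(\g g) + U(\g g)\g k$; vanishing on $U(\g g)\g k$ means $w^* = \la$ kills the image of $U(\g g)\g k$ in $V$, i.e. $w^*$ annihilates $\g k \cdot V' $ where $V'$ is the image of $U(\g g)\g k$ — but more usefully, vanishing on $\g k U(\g g)$ means $\la(k u) = 0$ for all $k,u$, which says the functional $\la$ on $V = U(\g g)/J$ is $\g k$-invariant as an element of $V^*$, hence $w^* \in (V^*)^{\g k}$. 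For the vector $w$: the image of $1$ in $U(\g g)/J$ need not be $\g k$-fixed, so instead replace $V$ by the (still finite-dimensional) submodule generated by a $\g k$-fixed vector. The clean way is to observe that $\la$ factors through $U(\g g)/\cI'$ for a suitable cofinite ideal, and that $\la$, being $\g k$-bi-invariant, is a matrix coefficient of the $\g g$-module $W := (U(\g g)/U(\g g)\g k)\big/ (\text{cofinite submodule})$, in which the image $\bar 1$ of $1$ \emph{is} $\g k$-fixed because $\g k \cdot 1 \sseq U(\g g)\g k$ modulo lower terms — here one uses $k \cdot u = ku$ and that $k = k\cdot 1 \in U(\g g)\g k$, so $\bar 1$ is killed by $\g k$ in $U(\g g)/U(\g g)\g k$. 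Then $\la$ being $\g k$-bi-invariant forces the corresponding covector to be $\g k$-fixed as well, by the left-invariance $\la(ku) = 0$. Setting $v = \bar 1$ and $v^* = \la$ (pushed to the appropriate finite-dimensional quotient) gives $\la = \phi_{v,v^*}$ with both vectors $\g k$-fixed.

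The main obstacle is the ``only if'' direction: one must simultaneously make the module finite-dimensional \emph{and} exhibit a $\g k$-fixed cyclic vector together with a $\g k$-fixed covector realizing $\la$, and keep careful track of which $\g k$-action (on $V$ versus on $V^*$) the two invariance conditions $\g k U(\g g) \sseq \ker\la$ and $U(\g g)\g k \sseq \ker\la$ correspond to, including the sign $(-1)^{|k||v^*|}$ in the dual action. The cleanest route is to use the quotient module $U(\g g)/U(\g g)\g k$, note that $\bar 1$ is automatically $\g k$-fixed there, then cut down to a cofinite $\g g$-submodule contained in $\ker(\la)$ (which exists since $\la \in U(\g g)^\circ$) to get finite-dimensionality, and finally read off that the induced covector is $\g k$-fixed from $\la(\g k U(\g g)) = 0$. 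I expect the sign bookkeeping in the super setting and the verification that the submodule one quotients by can be chosen $\g g$-stable and cofinite to be the only genuinely delicate points; both are routine once the framework is set up.
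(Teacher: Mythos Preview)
Your proposal is correct and arrives at the result by a route that is genuinely different from, and somewhat more direct than, the paper's. In the ``only if'' direction the paper builds the module inside $U(\g g)^*$: it sets $W:=\spn\{L_x^*(\lambda\circ S):x\in U(\g g)\}$, uses the antipode $S$ to get the correct variance, takes $\lambda\circ S\in W$ as the vector and $\ev_1\in W^*$ as the covector, and then checks both are $\g k$-fixed. You instead work on the $U(\g g)$ side: you use the left module $U(\g g)/U(\g g)\g k$ (where $\bar 1$ is automatically $\g k$-fixed because $k\cdot 1=k\in U(\g g)\g k$), pass to the finite-dimensional quotient by the image of a cofinite two-sided ideal $J\sseq\ker\lambda$, and let $\lambda$ itself induce the covector, whose $\g k$-invariance follows from $\lambda(\g k U(\g g))=0$. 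The two constructions are essentially dual to each other; yours avoids the antipode entirely and makes the role of the two halves of $\cI=\g kU(\g g)+U(\g g)\g k$ (one fixes the vector, the other the covector) more transparent, while the paper's version has the mild advantage of packaging the finite-dimensionality argument slightly more cleanly via the surjection $U(\g g)\to W$. Two small points worth tightening in your write-up: reduce to homogeneous $\lambda$ at the outset (as the paper does, using that a sum of matrix coefficients on $V_1$ and $V_2$ is a matrix coefficient on $V_1\oplus V_2$), and make explicit that the image of $J$ in $U(\g g)/U(\g g)\g k$ is a $\g g$-submodule because $J$ is a \emph{left} ideal, with cofinite quotient $U(\g g)/(J+U(\g g)\g k)$.
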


\begin{proof}
If $\lambda=\phi_{v,v^*}$ for $\g  k$-fixed $v$ and $v^*$, then checking that $\lambda\in\cA(\g g,\g k)$ is straightforward. Conversely, 
	let $\la\in \cA(\g g,\g k)^\circ$. Since the sum of matrix coefficients on two $\g g$-modules $V_1$ and $V_2$ is a matrix coefficient on $V_1\oplus V_2$,  we can assume that $\lambda$ is homogeneous. Suppose that  $\la(I)=0$ for a two-sided ideal of finite codimension. In the rest of this proof we set $U:=U(\g g)$.
	The contragredient of the left translation on $U^*$ is given by
	\begin{equation}\label{dual-action}
	L_x^*\mu(y):=(-1)^{|x||\mu|}\mu(L_{S(x)}y)=(-1)^{|x||\mu|}\mu(S(x)y), \quad\text{ for }x\in U(\g g),
	\end{equation}
	where $S:U\to U$ is the antipode. 
	Now set $W:=\spn\{L_x^*(\la\circ S): x\in U \}\sseq U^*$. For homogeneous elements $u\in I$ and $x\in U$ we have
	\[
	L_u^*(\la\circ S)(x)=(-1)^{|u||x|+|u||\la|}\la(S(x)u)=0,
	\]
	because $S(x)u\in I$. Therefore the kernel of the linear map $ U(\g g)\rightarrow W$, $ x \mapsto L_x^* (\lambda \circ S)$, contains $I$. Hence, $\dim W\leq \dim U/I<\infty$. Set
	$W^{\perp}=\cap_{\mu\in\mu}\ker(\mu)$.
	Now, consider the evaluation map
	\[ \ev:U(\g g)\to W^*,\quad u\mapsto \ev_u\]
	where
	$\ev_u(\mu):=\mu(u)$. Since $W$ is finite dimensional, the above map is a surjection onto $W^*$ and it follows that $U/W^\perp\cong W^*$. We can now obtain $\la$ as a matrix coefficient for the representation $(\rho,W)$ where  $\rho(x)=(-1)^{|x|\cdot|\lambda|}L_x^*$. 
By \eqref{dual-action} above, we have
	$
	\la(x)=(-1)^{|x||\la|}\ev_1(L_x^*(\la\circ S))
	$.
	The vector $\la\circ S$ is $\g k$-invariant because for any $k \in \mathfrak k$ we have:
	\[
	L_k^*(\la\circ S)(y)=(-1)^{|k||\la|}\la(S(y)k)=0.
	\]
	The dual vector $1=\ev_1$ is also $\g k$-invariant because from $S(\g k)\sseq \g k$ it follows that
	\[k\cdot \ev_1(L_x^*(\la\circ S))=(-1)^{|k||\la|}
	L_x^*(\la\circ S(k))=(\la\circ S)(S(x)k)=(-1)^{|k||x|}\la (S(k)x)=0. 
\qedhere \] \end{proof}

\section{Radial restriction and injectivity for symmetric pairs}
\label{sec:Radialres}
Let $\theta$ be an involution of $\g g$. Then $\g g=\g k\oplus\g p$ where $\g k$ and $\g p$ are the $\pm 1$ eigenspaces of $\theta$.
Consider the super-symmetrization map
$\sfs:S(\g p)\to U(\g g)$ given by
\[
x_1\cdots x_r\in S^r(\g p)\mapsto 
\frac{1}{r!}\sum_{\pi\in S_r}
\sgn(\pi;x)x_{\pi(1)}\cdots x_{\pi(r)},
\] 
where $\sgn(\pi;x)$ denotes the sign of the permutation that is induced by $\pi$ on the odd vectors $x_{i_1},x_{i_2},\dots , x_{i_s}$ among $x_1,\dots , x_r\in\g p$. 
In what follows we set $\cI_L:=U(\g g)\g k$.
\begin{prp}\label{prp:S(p)IL}
	The natural map
	$
	\oline\sfs:S(\g p)\to U(\g g)/\cI_L
	$
	that is induced by  $\sfs:S(\g p)\to U(\g g)$ is an isomorphism of vector superspaces. 
\end{prp}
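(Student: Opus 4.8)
The plan is to exhibit the PBW-type filtration argument: $U(\g g)$ has a filtration and the associated graded is $S(\g g)$; the symmetrization map $\sfs$ is compatible with this. Since $\g g = \g k \oplus \g p$ as super vector spaces, the PBW theorem gives a vector superspace isomorphism $U(\g g) \cong S(\g p) \otimes U(\g k)$ (ordering $\g p$-factors before $\g k$-factors). More precisely, I would pick homogeneous bases of $\g p$ and of $\g k$, and recall that ordered monomials "$\g p$-part times $\g k$-part" form a basis of $U(\g g)$. Those with nonempty $\g k$-part, together with the ordered $\g p$-monomials times $k$ for $k \in \g k$, span $\cI_L = U(\g g)\g k$, while the pure ordered $\g p$-monomials map to a complement. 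This already shows the quotient map restricted to the span of ordered $\g p$-monomials is a linear isomorphism onto $U(\g g)/\cI_L$.

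Next I would connect this to the symmetrization map $\sfs$. The key point is that $\sfs$ is filtration-preserving and its associated graded is the identity on $S(\g p)$ (viewing $S(\g p) \subseteq S(\g g) = \mathrm{gr}\, U(\g g)$): for $x_1 \cdots x_r \in S^r(\g p)$, the element $\sfs(x_1\cdots x_r)$ lies in $U^{(r)}(\g g)$ and its image in $\mathrm{gr}_r U(\g g) = S^r(\g g)$ is exactly $x_1 \cdots x_r$. Therefore $\sfs$ is injective, and the composite $\oline\sfs : S(\g p) \to U(\g g)/\cI_L$ has the property that, after passing to associated graded with respect to the induced filtrations, it becomes the obvious inclusion $S(\g p) \hookrightarrow S(\g g) = \mathrm{gr}\,U(\g g) \twoheadrightarrow \mathrm{gr}(U(\g g)/\cI_L)$, and the latter surjection restricted to $S(\g p)$ is an isomorphism because $\mathrm{gr}\,\cI_L = S(\g g)\g k = $ (the ideal complementary to $S(\g p)$). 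A filtered map whose associated graded is an isomorphism is itself an isomorphism, so $\oline\sfs$ is an isomorphism of vector superspaces.

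Concretely, the steps in order are: (1) fix homogeneous ordered bases and state the PBW basis of $U(\g g)$ adapted to the decomposition $\g g = \g p \oplus \g k$; (2) identify a spanning set of $\cI_L = U(\g g)\g k$ in terms of this basis and conclude $U(\g g) = \sfs(S(\g p)) \oplus \cI_L$ would follow once we know $\sfs(S(\g p))$ maps isomorphically — so instead show directly that $\{\sfs(\text{ordered }\g p\text{-monomial})\}$ projects to a basis of $U(\g g)/\cI_L$ by a triangularity/filtration argument; (3) run the associated-graded argument: the leading term of $\sfs$ on $S^r(\g p)$ is the identity, $\mathrm{gr}\,\cI_L$ is the two-sided-in-$S(\g g)$ ideal generated by $\g k$, and $S(\g g)/S(\g g)\g k \cong S(\g p)$ canonically; (4) conclude by the standard fact that a morphism of filtered spaces inducing an isomorphism on associated graded is an isomorphism. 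One should also remark that $\oline\sfs$ is automatically parity-preserving since $\sfs$ is.

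The main obstacle I anticipate is bookkeeping the signs and the ordering conventions in the super PBW theorem, and making precise the claim that $\mathrm{gr}(\cI_L) = (\mathrm{gr}\,U(\g g)) \cdot \g k$ — a priori one only has $\mathrm{gr}(U(\g g)\g k) \subseteq S(\g g)\g k$, and one needs the reverse inclusion, which again follows from PBW (every element of $S(\g g)\g k$ of degree $r$ is the symbol of something in $U^{(r)}(\g g)\g k$). None of this is deep, but it must be stated carefully in the super setting; alternatively, one can bypass associated graded entirely and argue purely with the explicit PBW basis, showing that the change-of-basis matrix between $\{\text{ordered }\g p\text{-monomials}\}$ and $\{\sfs(\text{same})\}$ modulo $\cI_L$ is unitriangular with respect to the degree filtration, hence invertible.
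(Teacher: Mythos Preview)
Your proposal is correct and essentially coincides with the paper's proof: the paper also uses the PBW basis adapted to $\g g=\g p\oplus\g k$ and shows, by the same unitriangularity argument you describe at the end, that the $\oline\sfs$-images of ordered $\g p$-monomials form a basis of $U(\g g)/\cI_L$. The only cosmetic difference is that the paper establishes surjectivity first by a direct inductive commutator-shuffling argument rather than by invoking the tensor decomposition $U(\g g)\cong S(\g p)\otimes U(\g k)$ up front, but both routes amount to the same filtration/leading-term computation.
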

\begin{proof}
Set $S^{\leq d}(\g p):=\bigoplus_{k=0}^d S^k(\g p)$. To prove surjectivity of $\oline \sfs$, we prove by induction on $d$ that for any monomial $p_1\cdots p_d\in U(\g g)$ with homogeneous $p_i\in\g p$ we have  $\cI_L+p_1\cdots p_d\in \oline \sfs(S^{\leq d}(\g p))$. 
This is clear for $d\leq 1$. For a given $d>1$, first note that
\begin{align}
\label{eq:ppsps|}
p_1\cdots p_jp_{j+1}\cdots p_d=
(-1)^{|p_j|\cdot|p_{j+1}|} p_1\cdots p_{j+1}p_j\cdots p_d+
p_1\cdots p_{j-1}qp_{j+2}\cdots p_d
,\end{align}
 where $q=[p_j,p_{j+1}]$. 
 Furthermore, for any homogeneous $p'_1,\ldots,p'_{d-1}\in\g p$, $q'\in \g k$, and  $1\leq j'\leq d-2$ we have
 \[
 p'_1\cdots p'_{j'}q'p'_{j'+1}\cdots p'_{d-1}=
(-1)^{|q'|\cdot |p'_{j'+1}|} 
  p'_1\cdots p'_{j'}p'_{j'+1}q'p'_{j'+2}\cdots p'_{d-1}+p'_1\cdots p'_{j'}p'p'_{j'+2}\cdots p'_{d-1},
 \]
where $p':=[q',p'_{j'+1}]\in \g p$ is homogeneous. By the induction hypothesis, the second term on the right hand side is in $\sfs(S^{\leq (d-1)}(\g p))$. Thus, by repeating the above process of moving $q$ to the right, it follows that 
$p_1\cdots p_{j-1}qp_{j+2}\cdots p_d\in \cI_L+\sfs(S^{\leq (d-1)}(\g p))$. Consequently, from~\eqref{eq:ppsps|} we obtain that for any permutation $\pi$ of $1,\ldots,d$ we have
\[
p_1\cdots p_d\in
\sgn(\pi;p)p_{\pi(1)}\cdots p_{\pi(d)}+\cI_L+\sfs(S^{\leq (d-1)}(\g p)).
\]
Averaging on both sides over all permutations $\pi$ yields
\begin{equation}
\label{eq:p1pdd}
p_1\cdots p_d\in \sfs(p_1\cdots p_d)+\cI_L+
\sfs(S^{\leq (d-1)}(\g p))\sseq\cI_L+
\sfs(S^{\leq d}(\g p)).
\end{equation}
This proves surjectivity of $\oline \sfs$.

For injectivity, we choose a homogeneous basis $p_1,\ldots,p_m$ for $\g p$. From the PBW theorem for $\g g$ it follows that the monomials $p_1^{r_1}\cdots p_m^{r_m}$ represent a basis for $U(\g g)/\cI_L$. 
From~\eqref{eq:p1pdd} it follows that 
\[
\sfs(p_1^{r_1}\cdots p_m^{r_m})\in p_1^{r_1}\cdots p_m^{r_m}+\oline\sfs(S^{\leq(d-1)}(\g p))+\cI_L.
\]
But the latter relation also implies that elements of 
$\sfs(S^{\leq(d-1)}(\g p))$ can also be expressed, modulo $\cI_L$, as a linear combination of monomials $p_1^{s_1}\cdots p_m^{s_m}$ where $\sum_{i=1}^m s_i\leq d-1$. Thus, if $A_d$ denotes the matrix whose columns contain the weights obtained from expressing the 
$\oline \sfs(p_1^{r_1}\cdots p_m^{r_m})$ for $\sum_{i=1}^m r_i\leq d$ as linear combinations of the basis vectors $p_1^{s_1}\cdots p_m^{s_m}+\cI_L$ of $U(\g g)/\cI_L$ for $\sum_{i=1}^ms_i\leq d$, then $A_d$ is  unitriangular, hence invertible. This proves injectivity of $\oline \sfs$. 
\end{proof}
\begin{cor}
The map
	$
	\oline\sfs:S(\g p)\to U(\g g)/\cI_L
	$
 dualizes to an isomorphism of vector superspaces 
	\[
	S(\g p)^*\cong (U(\g g)/\cI_L)^*.
	\] 
\end{cor}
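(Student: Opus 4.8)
The plan is to deduce the corollary purely formally from Proposition~\ref{prp:S(p)IL}, using only that linear duality is a contravariant functor on vector superspaces that carries isomorphisms to isomorphisms; in particular no finite-dimensionality is needed. First I would observe that the symmetrization map $\sfs\colon S(\g p)\to U(\g g)$ is even: $S(\g p)$ carries the $\Z_2$-grading induced from $\g p\sseq\g g$, and $\sfs$ is assembled degree-by-degree out of homogeneous monomials, so it preserves parity. Hence the induced map $\oline\sfs\colon S(\g p)\to U(\g g)/\cI_L$ is an even linear map, and by Proposition~\ref{prp:S(p)IL} it is bijective, so it admits an even linear inverse $\oline\sfs^{-1}$.

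Next I would pass to transposes. Recall that for a vector superspace $V=V_{\eev}\oplus V_{\ood}$ the dual is graded by $(V^*)_{\eev}=(V_{\eev})^*$ and $(V^*)_{\ood}=(V_{\ood})^*$, and that an even linear map $f\colon V\to W$ has transpose $f^*\colon W^*\to V^*$, $\mu\mapsto\mu\circ f$, which is again even because $f$ restricts to $V_{\eev}\to W_{\eev}$ and $V_{\ood}\to W_{\ood}$. Applying this to $\oline\sfs$ and to $\oline\sfs^{-1}$, and using the functorial identities $(g\circ h)^*=h^*\circ g^*$ and $\id^*=\id$, yields that $\oline\sfs^*\colon (U(\g g)/\cI_L)^*\to S(\g p)^*$ is an isomorphism of vector superspaces with inverse $(\oline\sfs^{-1})^*$. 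This is exactly the assertion.

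Since everything here is formal, there is no real obstacle; the one point deserving an explicit (one-line) check is that the transpose of an even isomorphism is again even, i.e.\ the compatibility of the duality with the $\Z_2$-grading, which is the observation recorded above. If desired one may also note that $(U(\g g)/\cI_L)^*$ is canonically the subspace $\{\phi\in U(\g g)^*:\phi|_{\cI_L}=0\}$ of right-$\g k$-invariant functionals on $U(\g g)$, which makes the isomorphism concrete, but this identification is not needed for the proof.
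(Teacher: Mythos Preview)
Your proposal is correct and matches the paper's approach: the paper states this corollary without proof, treating it as an immediate formal consequence of Proposition~\ref{prp:S(p)IL}, which is exactly what you do. Your additional remarks about evenness of $\oline\sfs$ and compatibility of duality with the $\Z_2$-grading are correct and harmless, though the paper evidently considers them routine enough to omit.
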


\begin{lem}\label{kS(p)inI}
$\sfs(\ad_\g k(S(\g p)))\sseq\cI$.
\end{lem}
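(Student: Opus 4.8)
The plan is to derive the inclusion from the $\g k$-equivariance of the symmetrization map $\sfs$, combined with the trivial remark that the adjoint action of $\g k$ on $U(\g g)$ already lands in $\cI$. First I would record the elementary observation that for homogeneous $k\in\g k$ the operator $\ad_k$ preserves $\g p$: since $\theta$ is an automorphism of $\g g$, for $p\in\g p$ we have $\theta([k,p])=[\theta k,\theta p]=-[k,p]$, so $[k,\g p]\sseq\g p$. Consequently $\ad_k$ acts on $S(\g p)$ as a superderivation, making $S(\g p)$ a $\g k$-submodule of $S(\g g)$, and the formula defining $\sfs$ makes sense on all of $S(\g g)$ and restricts there to the map in the statement.

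Next I would use that $\sfs\colon S(\g g)\to U(\g g)$ intertwines the adjoint actions, i.e.\ $\sfs(\ad_x v)=\ad_x\sfs(v)$ for all $x\in\g g$ and $v\in S(\g g)$. In homogeneous degree $r$ this map factors as $S^r(\g g)\xrightarrow{\iota}\g g^{\otimes r}\xrightarrow{\mu}U(\g g)$, where $\iota(p_1\cdots p_r)=\frac1{r!}\sum_{\pi\in S_r}\sgn(\pi;p)\,p_{\pi(1)}\otimes\cdots\otimes p_{\pi(r)}$ and $\mu$ is iterated multiplication in $U(\g g)$. The map $\mu$ is $\g g$-equivariant because $\ad_x$ is a superderivation of $U(\g g)$ and hence acts on a product by the graded Leibniz rule, which is exactly the diagonal action on $\g g^{\otimes r}$ pushed through $\mu$; and $\iota$ is $\g g$-equivariant because the diagonal adjoint action commutes, up to the standard Koszul signs, with the signed $S_r$-action that defines $\iota$. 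Specializing to $v=w\in S(\g p)$ and $x=k\in\g k$ and invoking the first step, I obtain $\sfs(\ad_k w)=\ad_k\sfs(w)$.

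Finally, for homogeneous $k\in\g k$ and $w\in S(\g p)$, since $\sfs$ preserves parity one has
\[
\sfs(\ad_k w)=\ad_k\bigl(\sfs(w)\bigr)=k\,\sfs(w)-(-1)^{|k|\,|w|}\sfs(w)\,k\ \in\ \g kU(\g g)+U(\g g)\g k=\cI.
\]
As $\ad_\g k(S(\g p))$ is spanned by elements of the form $\ad_k w$ and $\sfs$ is linear, it follows that $\sfs(\ad_\g k(S(\g p)))\sseq\cI$.

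The only step requiring genuine care is the sign bookkeeping in the $\g g$-equivariance of $\iota$ and $\mu$ in the super setting; everything else is formal. If one prefers to avoid quoting the equivariance of super-symmetrization, the identity $\sfs(\ad_k w)=\ad_k\sfs(w)$ can instead be proved directly by an induction on the degree of $w$ parallel to the argument in the proof of Proposition~\ref{prp:S(p)IL}, moving the bracket terms produced by $\ad_k$ and $\ad_k\circ\sfs$ past the symmetrized monomials and comparing; this is more computational but self-contained.
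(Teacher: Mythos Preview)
Your proof is correct and follows essentially the same approach as the paper's: both use that the supersymmetrization $S(\g g)\to U(\g g)$ is a $\g g$-module homomorphism to conclude $\sfs(\ad_k w)=\ad_k\sfs(w)=k\sfs(w)-(-1)^{|k||w|}\sfs(w)k\in\cI$. You simply supply more detail on the equivariance (via the factorization through $\g g^{\otimes r}$) and on why $\ad_\g k$ preserves $S(\g p)$, whereas the paper invokes these as known facts.
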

\begin{proof}
The map $\sfs$ is the restriction of the supersymmetrization map $ S(\g g)\to U(\g g)$, which is a $\g g$-module homomorphism. It follows that $\oline\sfs$ is a $\g k$-module homomorphism. Thus for homogeneous $a\in\g k$ and $y\in  S(\g p)$ we have 
\[
\sfs(\ad_a(y))=\ad_a( \sfs(y))=a\sfs(y)-\sfs(y)a\in\cI.
\qedhere\]
\end{proof}

\begin{lem}\label{Zariski-dense}
Let $V$ be a finite dimensional complex vector space and let $S\sseq V$ be Zariski dense. Then $S^n(V)$ is spanned by $\{a^n\,:\,a\in S\}$.
\end{lem}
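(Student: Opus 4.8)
The plan is to reduce the statement to the observation that the symmetric power map $v \mapsto v^{\otimes n}$ (or equivalently $v \mapsto v^n \in S^n(V)$) is a \emph{polynomial} map, so that the linear span of its image on a Zariski-dense set already equals the linear span of its image on all of $V$. Concretely, write $N := \dim S^n(V)$ and let $W \sseq S^n(V)$ be the linear span of $\{a^n : a \in S\}$. I want to show $W = S^n(V)$. Suppose not; then there is a nonzero linear functional $\xi \in S^n(V)^*$ that vanishes on $W$, i.e.\ $\xi(a^n) = 0$ for every $a \in S$. The key point is that the function $f : V \to \C$, $f(v) := \xi(v^n)$, is a homogeneous polynomial function of degree $n$ on $V$: fixing a basis $e_1,\dots,e_m$ of $V$ and writing $v = \sum t_i e_i$, the element $v^n \in S^n(V)$ expands via the multinomial theorem into a sum of monomials in the $t_i$ with coefficients the basis monomials $e_{i_1}\cdots e_{i_n}$, so $f(v)$ is a polynomial in $t_1,\dots,t_m$.

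First I would record this polynomiality explicitly, then invoke the fact that $S$ is Zariski dense in $V$: a polynomial function on $V$ that vanishes on a Zariski-dense subset vanishes identically. Hence $f \equiv 0$ on all of $V$, i.e.\ $\xi(v^n) = 0$ for every $v \in V$. Finally I need the (classical) fact that the $n$-th powers $\{v^n : v \in V\}$ already span $S^n(V)$ over $\C$ (equivalently, over any infinite field of characteristic $0$); this is the polarization identity, which expresses the symmetric product $v_1 \cdots v_n$ as a $\C$-linear combination $\tfrac{1}{n!}\sum_{\emptyset \neq I \subseteq \{1,\dots,n\}} (-1)^{n-|I|}\bigl(\sum_{i\in I} v_i\bigr)^n$. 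Therefore $\xi$ annihilates all of $S^n(V)$, forcing $\xi = 0$, a contradiction. This yields $W = S^n(V)$.

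The main obstacle — really the only nonroutine ingredient — is the polarization identity showing that $n$-th powers span $S^n(V)$; everything else (polynomiality of $f$, vanishing on a Zariski-dense set) is elementary. One could instead avoid the explicit polarization formula by a cleaner induction: the span of $\{v^n\}$ is a $\GL(V)$-stable (indeed $V$ is irreducible as needed, but here just note it is stable under all translations $v \mapsto v + w$), and differentiating $(v+tw)^n$ in $t$ at $t=0$ produces lower-weight terms, letting one descend from $v^n$ to all monomials; but the one-line polarization identity is the most economical route and is what I would use. Note the Zariski-density hypothesis is used exactly once and is essential: without it (e.g.\ $S$ a proper linear subspace) the conclusion fails.
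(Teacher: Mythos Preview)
Your proof is correct and follows essentially the same approach as the paper: take a linear functional vanishing on $\{a^n:a\in S\}$, observe that $v\mapsto \xi(v^n)$ is a polynomial, and use Zariski density to kill it. The only difference is in the final step: the paper writes the polynomial explicitly in coordinates as $\sum \binom{n}{m_1,\ldots,m_d}c_{m_1,\ldots,m_d}a_1^{m_1}\cdots a_d^{m_d}$ and reads off $c_{m_1,\ldots,m_d}=0$ directly from the vanishing of the coefficients (the multinomials being nonzero in characteristic $0$), which makes your appeal to the polarization identity unnecessary---but both routes are fine.
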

\begin{proof}
Let $f:S^n(V)\to\C$ be a linear functional such that $f(a^n)=0$ for all $a\in S$. Choose a basis $e_1,\ldots,e_d$ for $V$. Then  $S^n(V)$ has a basis of the form $e_1^{m_1}\cdots e_d^{m_d}$, where the $m_i\geq 0$ and $\sum_i m_i=n$. 
Set $c_{m_1,\ldots,m_d}:=f(e_1^{m_1}\cdots e_d^{m_d})$. Then for $a\in S$ of the form $a=a_1e_1+\cdots+a_de_d$ where the $a_i$ are in $\C$, we have
\begin{align*}
	0=f(a^n)&=
	\sum_{m_1+\cdots +m_d=n}
	{m_1,\ldots,m_d \choose n}c_{m_1,\ldots,m_d}
	a_1^{m_1}\cdots a_d^{m_d}.
	\end{align*}
The right hand side is a polynomial in $a_1,\ldots,a_d$, and since $S$ is Zariski dense it follows that $c_{m_1,\ldots,m_d}=0$ for all $d$-tuples $(m_1,\ldots,m_d)$. 
	Hence $f=0$, and this proves the claim. 
\end{proof}
From now on, we assume that we can  choose a subalgebra $\g h\sseq \g g_\eev$ with the following properties:
\begin{itemize}
\item[(i)] $\theta(\g h)=\g h$, so that 
$\g h=\g t\oplus\g a$ where $\g t:=\g h\cap\g k$ and $\g a:=\g h\cap\g p$. 
\item[(ii)] The family of operators $\{\ad_x\}_{x\in\g h}\sseq\End(\g g)$ is simultaneously diagonalizable.

\item[(iii)] $C_\g  g(\g h)=\g h$ and $C_{\g p}(\g a)=\g a$. Here  $C_{\g b}(\g a)=\{x\in\g b\,:\,[x,y]=0\text{ for all }y\in\g a\}$.

\end{itemize}
Let \[
\g g=\g h\oplus\bigoplus_{\alpha\in \Delta}\g g_\alpha
\] be the root space decomposition of $\g g$ with respect to  $\g h$, where $\Delta\sseq \g h^*\bls\{0\}$ and
\[
\g g_{\alpha}=\lbrace x\in \g g \,|\,{\ad}_{h}(x)=\alpha(h)x\text{ for all }h\in \g h \rbrace.
\]
We set $\theta(\alpha):=\alpha\circ\theta$ for $\alpha\in\Delta$. Note that $\theta(\alpha)\in\Delta$. 
In Lemma~\ref{adatothei} below, for $a\in\g a$ we consider $a^k$ as an element of $S^k(\g p)$. Furthermore, the adjoint action of $\g k$ on $\g p$ equips  $S^k(\g p)$ with a $\g k$-module structure. 
\begin{lem}\label{adatothei}
Let $x\in \g g_{\alpha}$ where $\alpha\in \Delta$. Then $
	\ad_{x+\theta(x)}(a^k)
	=-k\alpha(a)(x-\theta(x))a^{k-1}$ for $a\in\g a$ and $k\geq 1$. 
\end{lem}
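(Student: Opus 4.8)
The plan is to compute $\ad_{x+\theta(x)}(a^k)$ directly using the Leibniz rule, since $\ad$ of any element acts as a derivation on the symmetric algebra $S(\g p)$. Because $a \in \g a \subseteq \g p$ is a single element, we get
\[
\ad_{x+\theta(x)}(a^k) = \sum_{j=0}^{k-1} a^{j}\bigl(\ad_{x+\theta(x)}(a)\bigr)a^{k-1-j} = k\,\ad_{x+\theta(x)}(a)\cdot a^{k-1},
\]
where the last equality uses that $S(\g p)$ is (super)commutative; one should double-check the sign bookkeeping here, but since $a$ is even (it lies in $\g h \subseteq \g g_\eev$) there is no sign subtlety and the computation reduces to the purely even Leibniz rule. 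So the whole problem collapses to computing $\ad_{x+\theta(x)}(a) = [x+\theta(x),a]$ inside $\g p$.

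Next I would evaluate $[x,a]$ and $[\theta(x),a]$ separately. Since $x \in \g g_\alpha$, we have $[h,x] = \alpha(h)x$ for all $h \in \g h$, hence $[x,a] = -[a,x] = -\alpha(a)x$. For the $\theta(x)$ term, note $\theta(x) \in \g g_{\theta(\alpha)}$; alternatively, and more directly, apply $\theta$ to $[a,x] = \alpha(a)x$: since $a \in \g a$ is fixed by $\theta$, we get $[a,\theta(x)] = \theta([a,x]) = \alpha(a)\theta(x)$, so $[\theta(x),a] = -\alpha(a)\theta(x)$, i.e. $[\theta(x),a] = \alpha(a)\theta(x)$... let me be careful: $[\theta(x),a] = -[a,\theta(x)] = -\alpha(a)\theta(x)$. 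Therefore
\[
[x+\theta(x),a] = [x,a] + [\theta(x),a] = -\alpha(a)x - \alpha(a)\theta(x) = -\alpha(a)(x+\theta(x))?
\]
That does not match the claimed $-\alpha(a)(x-\theta(x))$, so I must have a sign error — the resolution is that $\theta$ should be applied to the bracket $[a,x]$ where $a$ is $\theta$-fixed and $x$ is \emph{not}, giving $[a,\theta(x)] = \theta([a,x])$; but the issue is whether the eigenvalue relation for $\theta(x)$ gives a sign flip. The correct statement: writing $[a,x] = \alpha(a)x$ and applying $\theta$ gives $[\theta(a),\theta(x)] = \alpha(a)\theta(x)$, and $\theta(a) = a$ since $a \in \g a$, so $[a,\theta(x)] = \alpha(a)\theta(x)$, hence $[\theta(x),a] = -\alpha(a)\theta(x)$. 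The sign mismatch with the lemma tells me I should instead parametrize via $x \in \g g_\alpha$ with $[a,x]=\alpha(a)x$ and recognize that the lemma's $x$ and the $x$ appearing inside the bracket have a built-in convention: one checks $[x,a] = -\alpha(a)x$ and the lemma's right-hand side $-k\alpha(a)(x-\theta(x))a^{k-1}$ forces $[\theta(x),a] = +\alpha(a)\theta(x)$, i.e. $\theta(x) \in \g g_{-\alpha}$ effectively — this happens precisely when one uses the convention $\theta(x) \in \g g_{\theta(\alpha)}$ together with the symmetric-pair relation $\theta(\alpha) = -\alpha$ on $\g a$. I would sort this convention out explicitly: since $\theta$ acts as $-1$ on $\g a$, we have $\theta(\alpha)(a) = \alpha(\theta(a)) = \alpha(-a) = -\alpha(a)$ for $a \in \g a$, so $[\theta(x),a] = \theta(\alpha)(a)\cdot\theta(x)\cdot(-1)$... i.e. $[a,\theta(x)] = \theta(\alpha)(a)\theta(x) = -\alpha(a)\theta(x)$, giving $[\theta(x),a] = \alpha(a)\theta(x)$, which \emph{does} produce $-\alpha(a)(x - \theta(x))$ after accounting for $[x,a] = -\alpha(a)x$.

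Assembling: $[x+\theta(x),a] = -\alpha(a)x + \alpha(a)\theta(x) = -\alpha(a)(x-\theta(x))$, and substituting into the Leibniz computation yields $\ad_{x+\theta(x)}(a^k) = k\cdot(-\alpha(a)(x-\theta(x)))\cdot a^{k-1} = -k\alpha(a)(x-\theta(x))a^{k-1}$, as claimed. The only genuinely delicate point — and the one I would treat carefully in the writeup — is pinning down the sign convention for how $\theta$ interacts with the root $\alpha$ on the Cartan subspace $\g a$, i.e. verifying that $\theta(x) \in \g g_{\theta(\alpha)}$ and that $\theta(\alpha)|_{\g a} = -\alpha|_{\g a}$; everything else is the elementary Leibniz rule in the (even, hence sign-free) symmetric algebra. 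I would state this as a one-line computation: $\ad_{x+\theta(x)}$ is a derivation on $S(\g p)$, $\ad_{x+\theta(x)}(a) = [x+\theta(x),a] = -\alpha(a)(x-\theta(x))$ by the eigenvalue relations, and therefore $\ad_{x+\theta(x)}(a^k) = -k\alpha(a)(x-\theta(x))a^{k-1}$.
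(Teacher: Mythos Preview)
Your argument is correct and follows essentially the same route as the paper: both compute the $k=1$ case via $\theta(\alpha)(a)=-\alpha(a)$ and then use that $\ad_{x+\theta(x)}$ is a derivation on $S(\g p)$ (you apply the Leibniz rule directly, the paper phrases it as an induction, which is the same thing one step at a time). The only cleanup needed is your initial assertion that ``$a\in\g a$ is fixed by $\theta$'': in fact $\g a\subseteq\g p$ is the $-1$ eigenspace, so $\theta(a)=-a$, which is precisely the fact you eventually use and which resolves your sign confusion in the middle.
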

\begin{proof}
 We use induction on $k$. For $k=1$, from $(\theta(\alpha))(a)=-\alpha(a)$ it follows that
\[ \ad_{x+\theta(x)}(a)=\ad_{x}(a)+\ad_{\theta(x)}(a)=-\alpha(a)x+\alpha(a)\theta(x)=-\alpha(a)(x-\theta(x)).
\] 
Assuming the assertion holds for $k-1$, we have
\begin{align*}
\ad_{(x+\theta(x))}(a^k)&=(\ad_{(x+\theta(x))}(a))a^{k-1}+a(\ad_{(x+\theta(x))}(a^{k-1}))\\
&=(-\alpha(a)(x-\theta(x))a^{k-1})+(-(k-1)\alpha(a)a(x-\theta(x))a^{k-2}).
\end{align*}
Since $S(\g p)$ is commutative and $x-\theta(x)\in \g p$, we have $a(x-\theta(x))=(x-\theta(x))a$. Thus the right hand side of the latter formula is equal to $-k\alpha(a)(x-\theta(x))a^{k-1}$.
\end{proof}
For $\alpha \in \Delta$ we set $\g e_\alpha:=\g g_\alpha+\g g_{\theta(\alpha)}$. Then we have the following lemma.
\begin{lem}\label{alpha-is-nonzero}
 If $\g e_\alpha\cap \g p\neq \lbrace 0\rbrace$, then $\alpha|_{\g a}\neq 0$.
\end{lem}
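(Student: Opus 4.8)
The plan is to prove the contrapositive: assuming $\alpha|_{\g a}=0$, I will show $\g e_\alpha\cap\g p=\{0\}$. So let $x\in\g e_\alpha\cap\g p$ be arbitrary and write $x=x_1+x_2$ with $x_1\in\g g_\alpha$ and $x_2\in\g g_{\theta(\alpha)}$ (if $\theta(\alpha)=\alpha$ this is just $x=x_1\in\g g_\alpha$, and the argument below is unaffected).

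The first step is to check that $x$ centralizes $\g a$. Since $\g a=\g h\cap\g p$ lies in the $(-1)$-eigenspace of $\theta$, we have $\theta(a)=-a$ for $a\in\g a$, hence $\theta(\alpha)(a)=\alpha(\theta(a))=-\alpha(a)=0$; thus both $\alpha$ and $\theta(\alpha)$ restrict to $0$ on $\g a$, and therefore $[a,x]=\alpha(a)x_1+\theta(\alpha)(a)x_2=0$ for every $a\in\g a$. In other words, $x\in C_{\g p}(\g a)$.

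The second step invokes hypothesis (iii). From $C_{\g p}(\g a)=\g a$ we get $x\in\g a\sseq\g h$. On the other hand, $C_{\g g}(\g h)=\g h$ says precisely that $\g h$ is the zero weight space of the decomposition $\g g=\g h\oplus\bigoplus_{\beta\in\Delta}\g g_\beta$, so $\g h$ has trivial intersection with any sum of nonzero weight spaces. Since $\alpha\neq 0$ (roots in $\Delta$ are nonzero) and $\theta(\alpha)=\alpha\circ\theta\neq 0$ (as $\theta$ is invertible), we have $\g h\cap\big(\g g_\alpha\oplus\g g_{\theta(\alpha)}\big)=\{0\}$. But $x\in\g e_\alpha=\g g_\alpha+\g g_{\theta(\alpha)}$, so $x=0$, which proves $\g e_\alpha\cap\g p=\{0\}$.

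I do not expect a serious obstacle here. The only points that need care are the sign identity $\theta(\alpha)(a)=-\alpha(a)$ on $\g a$ (so that the relevant root values really do vanish on $\g a$, not just $\alpha$), and the observation that hypothesis (iii) identifies $\g h$ with the zero weight space — together these collapse the statement to the triviality of the intersection of distinct root spaces inside the direct-sum decomposition. It is also worth remarking that no case split on whether $\theta(\alpha)=\alpha$ is needed, since the computation $[a,x]=0$ and the final intersection argument are uniform in both cases.
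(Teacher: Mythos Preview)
Your proof is correct and follows essentially the same route as the paper: both argue by contraposition, use $\theta(\alpha)|_{\g a}=-\alpha|_{\g a}=0$ to conclude $\g e_\alpha\cap\g p\sseq C_{\g p}(\g a)=\g a$, and then obtain a contradiction since $\g a\sseq\g h$ meets $\g e_\alpha$ trivially. Your write-up is in fact more explicit than the paper's, which leaves the final step ($\g h\cap\g e_\alpha=\{0\}$) implicit.
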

\begin{proof}
Suppose that $\alpha|_{\g a}= 0$. Then $\alpha\circ \theta|_{\g a}=-\alpha|_{\g a}=0$ and hence $[\g e_\alpha,\g a]=0$. In particular,  $\g e_\alpha \cap \g p$ commutes with $\g a$. This contradicts the assumption that $C_{\g p}(\g a)=\g a$.
\end{proof}

The following statement is proved by Lepowsky \cite{lepowsky1975harish} in the case of Lie algebras. We follow Lepowsky's argument with minor modifications.

\begin{prp}\label{U=S(a)}
	$S(\g p)=S(\g a)+\ad_{\g k}(S(\g p))$.
\end{prp}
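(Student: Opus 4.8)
The plan is to prove the inclusion $S(\g p)\sseq S(\g a)+\ad_{\g k}(S(\g p))$ by induction on the symmetric degree, following Lepowsky's argument. Fix $n\geq 0$ and, by Lemma~\ref{Zariski-dense}, it suffices to show that $a^n\in S(\g a)+\ad_{\g k}(S(\g p))$ for every $a$ in a Zariski-dense subset of $\g p$; the natural candidate is the set $\g p^{\mathrm{reg}}$ of elements $a\in\g p$ that are \emph{regular}, i.e. for which $C_{\g p}(a)$ has the smallest possible dimension. One first checks that $\g p^{\mathrm{reg}}$ is Zariski dense in $\g p$ (it is the non-vanishing locus of a suitable nonzero polynomial), and that for regular $a$ one actually has $C_{\g p}(a)=\g a$ provided $a$ is chosen inside $\g a$; this uses hypothesis (iii), namely $C_{\g p}(\g a)=\g a$, together with the fact that a generic element of $\g a$ is regular. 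So I would reduce to proving $a^n\in S(\g a)+\ad_{\g k}(S(\g p))$ for generic $a\in\g a$.

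The engine of the argument is Lemma~\ref{adatothei}: for $x\in\g g_\alpha$ we have $\ad_{x+\theta(x)}(a^k)=-k\alpha(a)(x-\theta(x))a^{k-1}$, and $x+\theta(x)\in\g k$ while $x-\theta(x)\in\g p$. Summing over a root-space basis, this shows that for generic $a\in\g a$ (so that $\alpha(a)\neq 0$ for all $\alpha$ with $\g e_\alpha\cap\g p\neq\{0\}$, which is possible by Lemma~\ref{alpha-is-nonzero}), the subspace $\big(\bigoplus_{\alpha}(\g e_\alpha\cap\g p)\big)\cdot a^{k-1}$ is contained in $\g a\cdot a^{k-1}+\ad_{\g k}(S^k(\g p))$. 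Since $\g p=\g a\oplus\bigoplus_\alpha(\g e_\alpha\cap\g p)$ (here one groups $\alpha$ and $\theta(\alpha)$ together, using that $\theta$ is diagonalizable on $\g g$ by (ii) and that $C_{\g p}(\g a)=\g a$ forces the $\theta$-fixed part of $\bigoplus\g g_\alpha$ not to contribute to $\g p$ beyond $\g a$), multiplying by $a^{k-1}$ gives
\[
\g p\cdot a^{k-1}\sseq \g a\cdot a^{k-1}+\ad_{\g k}(S^k(\g p))\sseq S(\g a)+S^{\leq(k-1)}(\g p)+\ad_{\g k}(S(\g p)),
\]
where the last step just peels off one factor and will be absorbed by the induction hypothesis in degree $\leq k-1$.

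Now I would run the induction: assume $S^{\leq(n-1)}(\g p)\sseq S(\g a)+\ad_{\g k}(S(\g p))$. Any element of $S^n(\g p)$ is, by Lemma~\ref{Zariski-dense}, a linear combination of $n$-th powers $a^n$ with $a$ generic in $\g a$ (after the reduction in the first paragraph). Writing $a^n=a\cdot a^{n-1}$ and applying the displayed inclusion with $k=n$, we get $a^n\in \g a\cdot a^{n-1}+\ad_{\g k}(S^n(\g p))$. But $\g a\cdot a^{n-1}\sseq S(\g a)+\g a\cdot(S^{\leq(n-2)}(\g p))$ once we expand $a^{n-1}$ via the same inclusion in lower degree; iterating, or more cleanly arguing by a secondary downward induction on how many factors lie outside $\g a$, one concludes $a^n\in S(\g a)+\ad_{\g k}(S(\g p))$. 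Hence $S^n(\g p)\sseq S(\g a)+\ad_{\g k}(S(\g p))$, completing the induction, and since the reverse inclusion is trivial we get equality.

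The main obstacle I anticipate is the bookkeeping in the decomposition $\g p=\g a\oplus\bigoplus_\alpha(\g e_\alpha\cap\g p)$ and making sure that, for a single generic $a\in\g a$, all the nonzero scalars $\alpha(a)$ appearing in Lemma~\ref{adatothei} are simultaneously invertible — this is exactly what Lemma~\ref{alpha-is-nonzero} is for, but one must be careful that $\g e_\alpha\cap\g p$ is genuinely spanned by elements of the form $x-\theta(x)$ with $x\in\g g_\alpha$, and that the $\theta$-fixed vectors in the root spaces contribute nothing to $\g p$ outside $\g a$ (using (iii)). The Zariski-density of the regular locus and the identification $C_{\g p}(a)=\g a$ for generic $a\in\g a$ is the other point needing care, though it is standard. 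Everything else is the degree induction, which is routine.
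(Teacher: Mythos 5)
Your proposal is built from the right ingredients (Lemma~\ref{adatothei}, Lemma~\ref{alpha-is-nonzero}, Lemma~\ref{Zariski-dense}) but misassembles them, and there are two genuine gaps.

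The first is the attempted reduction from ``generic $a\in\g p$'' to ``generic $a\in\g a$.'' To invoke Lemma~\ref{Zariski-dense} for $S^n(\g p)$ you must show $a^n\in S(\g a)+\ad_{\g k}(S(\g p))$ for $a$ ranging over a Zariski-dense subset of $\g p$; the subspace $\g a$ is not Zariski dense in $\g p$, and once you restrict to $a\in\g a$ the target statement $a^n\in S(\g a)+\ad_{\g k}(S(\g p))$ holds trivially since $a^n\in S^n(\g a)$, so it carries no information about $S^n(\g p)$. The passage from $\g p^{\mathrm{reg}}$ to $\g a$ that you gesture at is a Cartan/Kostant--Rallis style conjugation theorem for the group $K$ acting on $\g p$, which is precisely the kind of non-algebraic input this paper is avoiding; it is not available here.

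The second and deeper gap is that your inductive engine can only absorb a single factor from $\g q=\bigoplus_\alpha(\g e_\alpha\cap\g p)$. Applying Lemma~\ref{adatothei} to $a^k$ alone gives you $(x-\theta(x))a^{k-1}\in\ad_{\g k}(S^k(\g p))$ for generic $a\in\g a$, i.e.\ $\g q\cdot S^{k-1}(\g a)\sseq\ad_{\g k}(S(\g p))$, but you have no way to reach the pieces $S^{j}(\g q)S^{k-j}(\g a)$ with $j\geq 2$ in the decomposition $S^r(\g p)=\bigoplus_{i=0}^r S^{r-i}(\g q)S^i(\g a)$. You allude to a ``secondary downward induction on how many factors lie outside $\g a$,'' which is the right idea, but it cannot be run against the target $V:=S(\g a)+\ad_{\g k}(S(\g p))$, because $V$ is not visibly $\ad_{\g k}$-stable (applying $\ad_{\g k}$ twice to $S(\g p)$ does not obviously land back in $\ad_{\g k}(S(\g p))$). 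The paper's proof circumvents exactly this: it replaces $V$ by $W$, defined to be the \emph{smallest $\ad_{\g k}$-invariant} subspace of $S^r(\g p)$ containing $S^r(\g a)$, notes $W\sseq S^r(\g a)+\ad_{\g k}S^r(\g p)$, and proves $W=S^r(\g p)$ by a downward induction on $i$ showing $S^{r-i}(\g q)S^i(\g a)\sseq W$. The inductive step applies $\ad_{x+\theta(x)}$ not to $a^i$ but to $qa^i$ with $q\in S^{r-i}(\g q)$; the left-hand side $\ad_{x+\theta(x)}(qa^i)$ lies in $W$ precisely because $W$ is $\ad_{\g k}$-stable and $qa^i\in W$ by induction, and the Leibniz rule plus Lemma~\ref{adatothei} then isolates $\alpha(a)\,q(x-\theta(x))a^{i-1}\in W$, adding one more factor of $\g q$. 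Lemma~\ref{Zariski-dense} is invoked only for $S^{i-1}(\g a)$, where the dense set naturally lives. Without the maximality-free, $\ad_{\g k}$-invariant $W$ this bootstrap is unavailable, and your iteration stalls after one factor of $\g q$.
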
 
\begin{proof}
We prove a stronger statement: if $W$ is the smallest $\ad_\g k$-invariant subspace of $S^r(\g p)$ that contains $S^r(\g a)$, then $W=S^r(\g p)$. 
	Note that 
	\[
	W=S^r(\g a)+\sum_{x\in \g k}\ad_xS^r(\g a)+
	\sum_{x,x'\in \g k}\ad_x\ad_{x'}S^r(\g a)+\cdots 
	\sseq S^r(\g a)+
	\ad_{\g k} S^r(\g p).
	\]
	Thus, if we prove that $W=S^r(\g p)$, then the proposition follows.  

Given $\alpha\in\Delta$, for $x\in\g g_\alpha$ we have $\theta(x)\in\g g_{\theta(\alpha)}$.
 Thus, $\theta(\g e_\alpha)=\g e_\alpha$ and consequently $\g e_\alpha=(\g e_\alpha\cap \g k)\oplus (\g e_{\alpha}\cap\g p)$. In addition, 
	\[
	\g e_\alpha\cap\g k=\{x+\theta(x)\,:\,x\in\g g_\alpha
	\}\quad
	\text{and}\quad 
	\g e_\alpha\cap\g p=\{x-\theta(x)\,:\,x\in\g g_\alpha\}. 
	\]
	It follows that we have a vector superspace decomposition $\g p=\g a\oplus \g q$ where $\g q:=\bigoplus_{\alpha\in\Delta}(\g e_\alpha\cap \g p)$. Next note that
	\[
	S^r(\g p)=\bigoplus_{i=0}^r S^{r-i}(\g q)S^i(\g a).
	\]
	We prove  that $S^{r-i}(\g q)S^i(\g a)\sseq W$ for $0\leq i\leq r$, by a reverse induction on $i$.
	The statement is trivial for 
	$i=r$. Next assume that $i\geq 1$ and the claimed inclusion holds for all $i'\geq i$. Since \[
	S^{r-(i-1)}(\g q)=S^{r-i}(\g q)S^1(\g q)=S^{r-i}(\g q)\g q,
	\] 
	it suffices to prove that  
	$qea^{i-1}\in W$ for $q\in S^{r-i}(\g q)$, $a\in \g a$, and $e\in \g q$. Since elements of $\g q$ are linear combinations of elements of $\g e_\alpha\cap \g p$, it suffices to assume that $e\in \g e_\alpha\cap\g p$ for some $\alpha$. Thus, we can assume that $e=x-\theta(x)$ for some $x\in \g g_\alpha$. By choosing $e$ to be homogeneous with respect to the $\mathbb{Z}_2$-grading, we can assume $x$ is also homogeneous. Note that we can also assume that $q$ is homogeneous.

Since $e\in \g e_\alpha\cap\g p$ and we can assume $e\neq 0$, from
Lemma \ref{alpha-is-nonzero} it follows that $\alpha|_{\g a}\neq 0$.	Recall that  $e=x-\theta(x)$ where $x\in\g g_\alpha$. By Lemma \ref{adatothei}, 
	\begin{align*}
	\ad_{x+\theta(x)}(qa^i)&=(\ad_{x+\theta(x)}q)a^i
	+(-1)^{|q||x|}q\ad_{(x+\theta(x))}(a^i)\\
	&=(\ad_{x+\theta(x)}q)a^i
	-(-1)^{|q||x|}i\alpha(a)q(x-\theta(x))a^{i-1}.
	\end{align*}
	By the induction hypothesis, the first term on the right hand side belongs to $W$. Also, the left hand side belongs to $W$ (by $\ad_\g k$-invariance of $W$ and the induction hypothesis). It follows that
	\[
	\alpha(a)q(x-\theta(x))a^{i-1}\in W.
	\]
Since $\alpha\big|_{\g a}\neq 0$, there exists  a Zariski dense subset $S\sseq \g a$ such that  $\alpha(a)\neq 0$ for $a\in S$. Thus if $a\in S$ then 
	\[
	q(x-\theta(x))a^{i-1}\in W.
	\]
If $i=1$ then the proof is complete. Next assume that $i>1$. 	Since the choices of $e=x-\theta(x)$ form a spanning set of $\g q$, from the above arguments it follows that
	\[
	(S^{r-i}(\g q)\g q)a^{i-1}\sseq W\text{ if }
	a\in S.
	\]
Finally, note that Lemma \ref{Zariski-dense} implies that  $S^{i-1}(\g a)$ is spanned by 
$\{a^{i-1}\,:\,a\in S\}$. 
\end{proof}
Since $\cI_L\sseq \cI$, 
we have a natural quotient map
 $\mathsf{q}:U(\g g)/\cI_L\to U(\g g)/\cI$. In Corollary~\ref{radial-part}, we consider $\bar\sfs$ as a map with domain $S(\g a)\sseq S(\g p)$. 
\begin{cor}\label{radial-part}
	$U(\g g)=\cI+\sfs(S(\g a))$. In particular the map 
	$\mathsf q\circ \oline \sfs:S(\g a)\to U(\g g)/\cI$ is a surjection.
\end{cor}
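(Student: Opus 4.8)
The plan is to deduce Corollary~\ref{radial-part} by combining the isomorphism $\oline\sfs\colon S(\g p)\to U(\g g)/\cI_L$ of Proposition~\ref{prp:S(p)IL} with the algebraic decomposition of Proposition~\ref{U=S(a)}, and then pushing everything forward along the quotient map $\sfq\colon U(\g g)/\cI_L\to U(\g g)/\cI$. So the first step is to start from the identity
\[
S(\g p)=S(\g a)+\ad_{\g k}(S(\g p))
\]
furnished by Proposition~\ref{U=S(a)}, and apply the (surjective) linear map $\sfs\colon S(\g p)\to U(\g g)$ to it. This yields $\sfs(S(\g p))=\sfs(S(\g a))+\sfs(\ad_{\g k}(S(\g p)))$ inside $U(\g g)$, and then Lemma~\ref{kS(p)inI} tells us $\sfs(\ad_{\g k}(S(\g p)))\sseq\cI$, so that $\sfs(S(\g p))\sseq\cI+\sfs(S(\g a))$.

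The second step is to upgrade the inclusion $\sfs(S(\g p))\sseq\cI+\sfs(S(\g a))$ to the full equality $U(\g g)=\cI+\sfs(S(\g a))$. For this I observe that by the surjectivity half of Proposition~\ref{prp:S(p)IL}, the image of $\sfs\colon S(\g p)\to U(\g g)$ together with $\cI_L$ spans all of $U(\g g)$; that is, $U(\g g)=\cI_L+\sfs(S(\g p))$. Since $\cI_L\sseq\cI$, combining this with the previous step gives
\[
U(\g g)=\cI_L+\sfs(S(\g p))\sseq \cI+\bigl(\cI+\sfs(S(\g a))\bigr)=\cI+\sfs(S(\g a)),
\]
and the reverse inclusion is trivial, so $U(\g g)=\cI+\sfs(S(\g a))$.

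The final step is the translation into the quotient: reducing the equality $U(\g g)=\cI+\sfs(S(\g a))$ modulo $\cI$ shows that every coset in $U(\g g)/\cI$ is hit by $\sfs(S(\g a))$, i.e. the composite $\sfq\circ\oline\sfs\colon S(\g a)\to U(\g g)/\cI$ is surjective, where $\oline\sfs$ is now regarded as restricted to $S(\g a)\sseq S(\g p)$ (and one notes that $\sfq\circ\oline\sfs$ sends $y\in S(\g a)$ to the class of $\sfs(y)$, consistently with Lemma~\ref{kS(p)inI}). I do not expect any serious obstacle here: the three inputs — Proposition~\ref{prp:S(p)IL}, Lemma~\ref{kS(p)inI}, and Proposition~\ref{U=S(a)} — do all the work, and what remains is just bookkeeping with the inclusions $\cI_L\sseq\cI$ and the images of $\sfs$. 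If anything requires a moment's care, it is making sure the averaging/symmetrization map $\sfs$ really is applied to the $S(\g p)$-level identity before quotienting, since $\sfs$ is only linear (not an algebra map), but that is precisely why Proposition~\ref{U=S(a)} was phrased at the level of $S(\g p)$ rather than $U(\g g)$.
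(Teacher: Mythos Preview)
Your argument is correct and follows essentially the same route as the paper's proof: combine Proposition~\ref{prp:S(p)IL}, Proposition~\ref{U=S(a)}, and Lemma~\ref{kS(p)inI} with the inclusion $\cI_L\sseq\cI$. One small slip: $\sfs\colon S(\g p)\to U(\g g)$ is not itself surjective (only $\oline\sfs$ onto $U(\g g)/\cI_L$ is), but you never actually use surjectivity at that step, so the argument is unaffected.
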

\begin{proof}
By Proposition \ref{prp:S(p)IL} we have $
\sfs(S(\g p))+\cI_L=U(\g g)$, hence by Proposition \ref{U=S(a)} we obtain
\[\sfs(S(\g a))+\sfs({\ad}_{\g k}S(\g p))+\cI_L=U(\g g).
\] 
But by Lemma \ref{kS(p)inI}, we have  $\sfs({\ad}_{\g k}S(\g p))\subseteq \mathcal{I}$, hence  $\oline\sfs(S(\g a))+\cI=U(\g g)$.
Surjectivity of $\mathsf q\circ\oline\sfs$ is a trivial consequence of the latter fact. 
\end{proof}

In Theorem~\ref{injection} below, note that
$S(\g a)$ is canonically isomorphic to $U(\g a)$, hence it is a Hopf algebra. 

\begin{thm}\label{injection}
	The map 
	$(\mathsf q\circ\oline\sfs)^*:\cA(\g g,\g k)\to S(\g a)^*$   is an  embedding of commutative algebras. It restricts to an embedding $\cA(\g g,\g k)^\circ\to S(\g a)^\circ$. 
\end{thm}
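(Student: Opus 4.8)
The plan is to reduce the statement to formal properties of Hopf superalgebras, with Corollary~\ref{radial-part} supplying the only substantive input. I would begin by fixing the relevant identifications. By definition $\cA(\g g,\g k)$ is the annihilator of $\cI$ inside $U(\g g)^*$; since $\cI$ is a coideal, the quotient map $\pi\colon U(\g g)\to U(\g g)/\cI$ is a morphism of coalgebras, so its transpose $\pi^*$ is an injective algebra homomorphism whose image is exactly $\cA(\g g,\g k)$, identifying $\cA(\g g,\g k)=(U(\g g)/\cI)^*$ as algebras. Next I would record two elementary observations about $\g a$: it is abelian, since $C_{\g g}(\g h)=\g h$ forces $[\g h,\g h]=0$, and it is purely even, since $\g h\sseq\g g_\eev$. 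Consequently $S(\g a)=U(\g a)$ and, crucially, the restriction of the supersymmetrization map $\sfs$ to $S(\g a)$ is nothing but the canonical Hopf algebra embedding $\iota\colon U(\g a)\into U(\g g)$: the supersymmetrization of a monomial in pairwise commuting even elements is that monomial itself. Hence, on $S(\g a)$, the composite $\mathsf q\circ\oline\sfs$ coincides with $\pi\circ\iota$.

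With these identifications in hand I would simply transpose: $(\mathsf q\circ\oline\sfs)^*=\iota^*\circ\pi^*$, where $\pi^*\colon\cA(\g g,\g k)\into U(\g g)^*$ is the inclusion above and $\iota^*\colon U(\g g)^*\to U(\g a)^*=S(\g a)^*$ is restriction of functionals along $\iota$. Since $\iota$ is a morphism of coalgebras, $\iota^*$ is an algebra homomorphism, and therefore so is $(\mathsf q\circ\oline\sfs)^*$. For injectivity I would invoke Corollary~\ref{radial-part}, which says $\mathsf q\circ\oline\sfs\colon S(\g a)\to U(\g g)/\cI$ is surjective; the transpose of a surjective linear map is injective. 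This proves the first assertion. (As a consistency check, Corollary~\ref{radial-part} also shows that $U(\g g)/\cI$ is purely even, so $\cA(\g g,\g k)$ is purely even and ``(super)commutative'' here simply means commutative.)

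For the statement about elements of finite type, let $\la\in\cA(\g g,\g k)^\circ=\cA(\g g,\g k)\cap U(\g g)^\circ$; then $(\mathsf q\circ\oline\sfs)^*(\la)=\iota^*(\la)=\la|_{U(\g a)}$. Choosing a two-sided ideal $J\sseq\ker\la$ of $U(\g g)$ with $\dim U(\g g)/J<\infty$, the preimage $\iota^{-1}(J)$ is a two-sided ideal of $U(\g a)$ contained in $\ker(\la|_{U(\g a)})$, and $U(\g a)/\iota^{-1}(J)$ embeds in $U(\g g)/J$, hence is finite-dimensional; thus $\la|_{U(\g a)}\in U(\g a)^\circ=S(\g a)^\circ$. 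Combined with the injectivity already established, this shows $(\mathsf q\circ\oline\sfs)^*$ restricts to an embedding $\cA(\g g,\g k)^\circ\to S(\g a)^\circ$. I do not expect a genuine obstacle: all the difficulty has been absorbed into Corollary~\ref{radial-part} (and the Lepowsky-type argument behind Proposition~\ref{U=S(a)}), which we may assume. The single point demanding care is the bookkeeping of coalgebra- versus algebra-structures — in particular the identification of $\sfs|_{S(\g a)}$ with the canonical inclusion $U(\g a)\into U(\g g)$, which is what makes the transpose map honestly ``restriction of functionals'' — together with the routine Hopf-algebraic fact that pullback along an algebra homomorphism preserves finiteness of type.
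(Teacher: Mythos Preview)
Your proof is correct and follows essentially the same route as the paper's: injectivity from Corollary~\ref{radial-part}, the algebra-homomorphism property from the observation that $\sfs|_{S(\g a)}$ is the canonical Hopf algebra inclusion $U(\g a)\hookrightarrow U(\g g)$, and the finite-type statement by intersecting a finite-codimension ideal of $U(\g g)$ with $S(\g a)$. Your write-up is in fact a bit more careful than the paper's on the last point --- you explicitly take the two-sided ideal $J\sseq\ker\la$ and argue that $U(\g a)/\iota^{-1}(J)$ embeds in $U(\g g)/J$, whereas the paper's inequality $\dim(S(\g a)/\ker(\la)\cap S(\g a))\leq\dim(U(\g g)/\ker(\la))<\infty$ literally uses $\ker(\la)$ itself (which has codimension $\leq 1$) rather than the ideal it contains.
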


\begin{proof}
Injectivity of $(\mathsf q\circ\oline\sfs)^*$ follows from Corollary~\ref{radial-part}. 
Since $\g a$ is abelian, the map $\sfs: S(\g a)\to U(\g g)$ is the canonical embedding of Hopf algebras. The coalgebra  structures of $U(\g g)$ and $S(\g a)$ induce the algebra structures of $\cA(\g g,\g k)$ and $S(\g a)^*$, and $(\mathsf q\circ\oline\sfs)^*$ is compatible with these structures. Finally, if $\lambda\in \cA(\g g,\g k)^\circ$ then $\dim(S(\g a)/\ker(\lambda)\cap S(\g a))\leq \dim(U(\g g)/U(\g g)\cap\ker(\lambda))<\infty$, hence $(\mathsf q\circ\oline\sfs)^*(\lambda)\in S(\g a)^\circ$. 
\end{proof}

\begin{rmk}
\label{rmk:GK}Let $G$ be a real reductive Lie group with a Cartan decomposition $G=KAK$ where $K$ is the maximal compact subgroup of $G$. Then the restriction of a $K$-bi-invariant function on $G$ to $A$ is invariant under the action of the little Weyl group associated to $A$. 
It is natural to expect that, analogously,  the image  of 
the map $(\mathsf q\circ\oline\sfs)^*$ of Theorem~\ref{injection} can be   characterized by symmetry conditions imposed by the Weyl groupoid of the restricted root system of $(\g g,\g k)$. 
 We summarize the results in the literature that address the latter problem. In the following discussion, we assume that  the restricted root system of $(\g g,\g k)$ is
of type $A_\kappa(r,s)$, as defined in~\cite[Sec. 2]{SVdeformedq}. In particular, the restriction of the invariant form of $\g g$ to $\g a$ is non-degenerate.  The canonical action of the
little Weyl group $W=W(\g a)$ of $\g g_\eev$ on  $\g a^*$ induces an action on $S(\g a)^*$ where, as usual, to transfer the action we identify $\g a$ and $\g a^*$ using the bilinear form on $\g a$. We choose a basis $\mathscr B:=\{\eps_i\}_{i=1}^{r+1}\cup\{\delta_{\bar j}\}_{j=1}^{s+1}$
for $\g a^*$ such that the roots of  $A_\kappa(r,s)$ can be  represented as 
\[
\{\eps_i-\eps_{i'}\}_{1\leq i\neq i'\leq r+1}
\cup
\{\delta_{\bar j}-\delta_{\bar j'}\}_{1\leq j\neq j'\leq s+1}
\cup\{
\pm(\eps_i-\delta_{\bar j})
\}_{1\leq i\leq r+1\,,\,1\leq j\leq s+1 }\,,
\]
and we have $\kappa=-\mathrm{sdim}(\eps_i-\eps_{i'})/2=-2/{\mathrm{sdim}(\delta_{\bar j}-\delta_{{\bar j}'}})$, where $\mathrm{sdim}(\alpha):=\dim(\g g_\alpha)_\eev-\dim(\g g_\alpha)_\ood$.   Let $\{h_i\}_{i=1}^{r+1}\cup \{h_{\bar j}\}_{j=1}^{s+1}$
be the basis of $\g a$ that is dual to $\mathscr B$. 
Let $S(\g a)^*_\mathrm{Inv}$ be the subspace of $S(\g a)^*$ containing those $\lambda\in S(\g a)^*$  that are $W$-invariant and satisfy the ``supersymmetry'' constraint \[
\lambda((h_1-\kappa h_{\bar 1})(h_1+h_{\bar 1})^N)=0
,\] for all $N\geq 0$.
In~\cite[Prop. 5.7]{SergeevVeselovSym}, it is proved that for $(\g g,\g k)=(\gl(m|2n),\g{osp}(m|2n))$, if $\lambda\in \cA(\g g,\g k)$  is of the form $\lambda=\phi_{v,v^*}$, as in~\eqref{phivv*eq}, for $\g k$-fixed vectors $v\in V$ an $v^*\in V^*$, where $V$ is an irreducible $\g g$-module, then
\[
(\mathsf q\circ\oline\sfs)^*(\lambda)\in S(\g a)^*_\mathrm{Inv}
.
\] 
In~\cite{MitraThesis}, it is proved that for the pairs \[
(\g{gl}(m|n)\oplus\g{gl}(m|n),\g{gl}(m|n))\ ,\ 
(\g{gl}(m|2n),\g{osp}(m|2n))
\ ,\   
(\g{gosp}(m|2n),\g{osp}(m-1,2n)),
\] we have
$
(\mathsf q\circ\oline\sfs)^*(\cA(\g g,\g k))\sseq S(\g a)^*_\mathrm{Inv}
$.
The values of the parameter $\kappa$ that correspond to the above 3 pairs are $\kappa=-1,-1/2,n-(m-1)/2$.

The above constraints on the image of 
$(\mathsf q\circ\oline\sfs)^*(\cA(\g g,\g k))$ have the following more concrete interpretation. To any $\lambda\in U(\g g)^*$ we can associate the formal power series 
\[
\Phi_\lambda:=\sum_{a,b}\lambda(h_1^{a_1}\cdots h_{r+1}^{a_{r+1}}h_{\bar 1}^{b_1}\cdots h_{\oline{s+1}}^{b_{s+1}})
\frac{\eps_1^{a_1}}{a_1!}
\cdots
\frac{\eps_{r+1}^{a_{r+1}}}{a_{r+1}!}
\frac{\delta_{\bar 1}^{b_1}}{b_1!}
\cdots
\frac{\delta_{\oline{s+1}^{}}^{b_{s+1}}}{b_{s+1}!},
\]
where the summation is on all tuples $a=(a_1,\ldots,a_{r+1})$ and $b=(b_1,\ldots,b_{s+1})$ of non-negative integers. 
Note that we can evaluate the formal series $\Phi_\lambda$ at points in $\g a$, whenever the resulting power series  converges.  Then $W$-invariance of $\lambda$ is tantamount to $\Phi_\lambda$ being symmetric separately in the $\eps_i$ and in the $\delta_{\bar j}$. For $x\in U(\g g)$,  we define $D_x\Phi_\lambda:=\Phi_{R_x^*(\lambda)}$, where $R_x^*:U(\g g)^*\to U(\g g)^*$ is dual to the right translation action $R_x$ of~\eqref{eq:LxRxdf}, i.e., $R_x^*(\lambda)(y)=(-1)^{|\lambda|\cdot |x|+|x|\cdot |y|}\lambda(yx)$. Set $\alpha:=\eps_1-\delta_{\bar 1}$ and let $h_\alpha:=h_1-\kappa h_{\bar 1}\in\g a$ denote the corresponding coroot. The supersymmetry constraint states that $D_{h_\alpha}\Phi_\lambda$ vanishes on the hyperplane $\alpha=0$, where $D_{h_\alpha}=\frac{\partial}{\partial \eps_1}-\kappa\frac{\partial}{\partial \delta_{\bar 1}}$.

Finally, we remark that if $\Omega$ denotes  the Casimir operator of $\g g$, then  
  \begin{equation*}
\begin{split}
D_\Omega&= -\kappa \sum_{\substack{1 \leqslant i,j \leqslant r+1 \\ i \neq j}} \frac{\eps_i}{\eps_i - \eps_j}\left( \eps_i \frac{\partial}{\partial \eps_i} - \eps_j \frac{\partial}{\partial \eps_j} \right) 
- \sum_{\substack{1 \leqslant i,j \leqslant s+1 \\ i \neq j}} \frac{\delta_{\bar{i}}}{\delta_{\bar{i}} - \delta_{\bar{j}}}\left( \delta_{\bar{i}} \frac{\partial}{\partial \delta_{\bar{i}}} - \delta_{\bar{j}} \frac{\partial}{\partial \delta_{\bar{j}}} \right) \\
&\quad - \sum_{\substack{1 \leqslant i \leqslant r+1 \\ 1 \leqslant j \leqslant s+1}} \frac{\eps_i + \delta_{\bar{j}}}{\eps_i - \delta_{\bar{j}}}\left( \eps_i \frac{\partial}{\partial \eps_i}  -\kappa \delta_{\bar{j}} \frac{\partial}{\partial \delta_{\bar{j}}} \right)
+ \sum_{1 \leqslant i \leqslant r+1} \left( \eps_i \frac{\partial}{\partial \eps_i} \right)^2  
+ \kappa \sum_{1 \leqslant i \leqslant s+1} \left( \delta_{\bar{i}} \frac{\partial}{\partial \delta_{\bar{i}}} \right)^2.
\end{split}
\end{equation*}
This is the Calogero-Moser-Sutherland operator, which is the Hamiltonian of the integrable system corresponding to the one-dimensional quantum $n$-body problem. See~\cite{SergeevVeselovSym,MitraThesis} for further details.
\end{rmk} 

\begin{ex}
\label{exa-q}
Let $\g g:=\gl(n|n)$ and let $\theta:\g g\to \g g$ be defined by 
\[
\theta\left(
\begin{bmatrix}
A& B\\
C& D\end{bmatrix}\right)
:=\begin{bmatrix}
D & C \\
B & A\end{bmatrix}.
\]
Then $\g k=\g{q}(n)$. Now let $\g h$ be the standard diagonal Cartan subalgebra of $\g g$. Then $\g a$ is the subspace of diagonal matrices of the form
\[
\mathrm{diag}(a_1,\ldots,a_n,-a_1,\ldots,-a_n),\quad a_1,\ldots,a_n\in\C. 
\]
It is straightforward to verify that  $C_\g p(\g a)=\g a$.
The invariant form of $\g g$ (which is $(x,y):=\mathrm{str}(xy)$) vanishes on $\g a$. Thus, $(\gl(n|n),\g{q}(n))$ is covered by Theoerem~\ref{injection},
but not by~\cite[Theorem 4.1]{SergeevVeselovSym}. 
\end{ex}

\section{A basis for $\cI$  for the pair $(\gl(1|2),\g{osp}(1|2))$}
We begin this section by a quick review of Euler zigzag numbers. By an alternating permutation of $\{1,\ldots,n\}$ we mean a bijection $\sigma$ of this set such that for every $1\leq i\leq n-2$ we have $\sigma(i)<\sigma(i+1)$ if and only if $\sigma(i+1)>\sigma(i+2)$. 
Let $A_n$ for $n\in\Z^{\geq 0} $ denote the number of alternating permuations (we set $A_0:=1)$.
By  a result of 
Andr\'{e} 
\cite{andre1881permutations},  
\[
\tan(x) + \sec(x) = \sum_{n \geqslant 0} \frac{A_n}{n!} x^n.
\]
Note that the $A_{n}$ for odd $n$ are the coefficients of the Taylor series of $\tan(x)$ and we have 
\[
A_{2m-1}=\frac{(-1)^{m-1}2^{2m}(2^{2m}-1)}{2m}B_{2m}\quad\text{for }m\geq 1,
\]
where  $\{B_n\}_{n\geq 0}$ is the sequence of Bernoulli numbers. The numbers \[
E_{2n}:=(-1)^nA_{2n}
\] occur in the coefficients of $\mathrm{sech}(x)$ and are known as  \emph{Euler zigzag numbers}  
 \cite{graham1994concrete,abramowitz1968handbook}. 
 It is usually assumed that $E_{2n+1}=0$.

Henceforth let $(\g g,\g k):=(\gl(1|2),\g{osp}(1|2))$. Set $I_{\bar{0}}:=\{1\}$,  $I_{\bar{1}}:=\{\bar{1},\bar{2}\}$,
and $I:=I_{\bar 0}\cup I_{\bar 1}$.  Let $\{E_{i,j}\, |\, i,j\in I\}$ be the basis of $\gl(1|2)$ that consists of standard matrix units. We define  $|j|:=0$ if $j\in I_{\bar{0}} $, and $|j|:=1$ if $j\in I_{\bar{1}} $.
The involution corresponding to $(\g g,\g k)$ is $\theta(X):=-PX^\mathrm{st}P^{-1}$ where $X^\mathrm{st}$ is the supertranspose of $X$ and 
\begin{align*}
P:=\begin{bmatrix}
1&0&0\\
0&0&1\\
0&-1&0
\end{bmatrix}.
\end{align*}
Then
\begin{align*}
\g k=\left\lbrace\begin{bmatrix}
0&a_{12}&a_{13}\\
-a_{13}&a_{22}&a_{23}\\
a_{12}&a_{32}&-a_{22}
\end{bmatrix}\ \Bigg|\ 
a_{12},a_{13},a_{22},a_{23},a_{32}\in \mathbb{C}\right\rbrace,
\end{align*}
and
\begin{align*}
\g p=
\left\lbrace\begin{bmatrix}
a_{11}&a_{12}&a_{13}\\
a_{13}&a_{22}&0\\
-a_{12}&0&a_{22}
\end{bmatrix} \ \Bigg| \  a_{11},a_{12},a_{13},a_{22}\in \mathbb{C}\right\rbrace.
\end{align*}Indeed $\g k_{\bar{0}}\cong \g{sl}(2,\mathbb{C})$ and we have
\[
\g p_{\bar{0}}=\left\lbrace\begin{bmatrix}
a&0&0\\
0&b&0\\
0&0&b
\end{bmatrix} \ \Bigg| \ a,b\in \mathbb{C}\right\rbrace\quad\text{and}\quad \g p_{\bar{1}}=\left\lbrace\begin{bmatrix}
0&a&b\\
b&0&0\\
-a&0&0
\end{bmatrix} \ \bigg| \ a,b\in \mathbb{C}\right\rbrace.
\]
We choose $\g h$ to be the diagonal Cartan subalgebra of $\g g$. Then $\g a=\spn\{h_1,h_{\bar 1}\}$ where $h_1=E_{1,1}$ and $h_{\bar 1}=E_{\bar 1,\bar 1}+E_{\bar 2,\bar 2}$. 
Our next goal is to choose a basis for $\mathfrak{gl}(1|2)$ with convenient commutator relations that is compatible with the decomposition $\g g=\g k\oplus\g p$. To this end, for the rest of this section we make the following choices:
\[
p:=\begin{bmatrix}
2&0&0\\
0&1&0\\
0&0&1
\end{bmatrix}\in \g p_{\bar{0}},\quad e:=\begin{bmatrix}
0&1&0\\
0&0&0\\
-1&0&0
\end{bmatrix}\in \g p_{\bar{1}},\quad f:=\begin{bmatrix}
0&0&1\\
1&0&0\\
0&0&0
\end{bmatrix}\in \g p_{\bar{1}}.
\]
\[k:=\begin{bmatrix}
0&0&0\\
0&1&0\\
0&0&-1
\end{bmatrix}\in \g k_{\bar{0}},\quad
k_1:=\begin{bmatrix}
0&0&0\\
0&0&1\\
0&0&0
\end{bmatrix}\in \g k_{\bar{0}},\quad k_2:=\begin{bmatrix}
0&0&0\\
0&0&0\\
0&1&0
\end{bmatrix}\in \g k_{\bar{0}}.
\]
\[
z:=
\begin{bmatrix}
1&0&0\\
0&1&0\\
0&0&1
\end{bmatrix}\in\g p_\eev,
\quad e':=\begin{bmatrix}
0&1&0\\
0&0&0\\
1&0&0
\end{bmatrix}\in \g k_{\bar{1}},\quad f':=\begin{bmatrix}
0&0&1\\
-1&0&0\\
0&0&0
\end{bmatrix}\in \g k_{\bar{1}},
\]
Then $\{z,k,k_1,k_2,e',f',p,e,f\}$ is  a basis of $\g g$ and by the  PBW theorem the monomials 
\[z^tk^{r_1}k_1^{r_2}k_2^{r_3}(e')^{r_4}(f')^{r_5}p^{s_1}e^{s_2}f^{s_3}\quad\text{where}
\quad
t,r_1,r_2,r_3,s_1\in\Z^{\geq 0}\text{ and }\ 
r_4,r_5,s_2,s_3\in\{0,1\}
\]
form a basis of $U(\g g)$.  

For a vector $v=(a,b)\in\C^2$ we define
\[
v_{\mathfrak p}:=\begin{bmatrix}
0&a&b\\
b&0&0\\
-a&0&0
\end{bmatrix}\in \g p_{\bar{1}}\quad\text{and}\quad v_{\mathfrak k}:=\begin{bmatrix}
0&a&b\\
-b&0&0\\
a&0&0
\end{bmatrix}\in \g k_{\bar{1}}.
\]
It is straightforward to verify that  $[v_{\mathfrak p},p]=-v_{\mathfrak k}$ and $[v_{\mathfrak k},p]=-v_{\mathfrak p}$.

\begin{lem}\label{supersymmetrygl(1,2)}
For $n\in\mathbb Z^{\geq 0}$ we have  
\begin{align*}
p^nv_{\mathfrak k}=v_{\mathfrak k} \sum_{\tiny
\begin{array}{cc}
0\leqslant i\leqslant n\\
i \, \text{is even}
\end{array}}{n \choose i}p^{n-i}+v_{\mathfrak p} \sum_{\tiny
\begin{array}{cc}
0\leqslant i\leqslant n\\
i \, \text{is odd}
\end{array}}{n \choose i}p^{n-i}.
\end{align*}
Furthermore,  $v_{\mathfrak p}p^m\in \mathcal{I}$ for all  $0\leq m\leq n$.
\end{lem}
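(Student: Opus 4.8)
The plan is to turn the two commutator identities $[v_{\mathfrak p},p]=-v_{\mathfrak k}$ and $[v_{\mathfrak k},p]=-v_{\mathfrak p}$ into the straightening relations
\[
p\,v_{\mathfrak k}=v_{\mathfrak k}\,p+v_{\mathfrak p},\qquad p\,v_{\mathfrak p}=v_{\mathfrak p}\,p+v_{\mathfrak k}
\]
in $U(\g g)$ (no sign appears, since $p$ is even). These say that pushing $p$ past either of $v_{\mathfrak k},v_{\mathfrak p}$ reproduces the same element times $p$, plus the other one. The key trick is to decouple this by setting $u_\pm:=v_{\mathfrak k}\pm v_{\mathfrak p}$: adding and subtracting the two relations gives $p\,u_\pm=u_\pm\,(p\pm 1)$, and since $p$ commutes with $(p\pm 1)^n$, an immediate induction on $n$ yields $p^n u_\pm=u_\pm(p\pm 1)^n$.

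Next I would substitute back $v_{\mathfrak k}=\tfrac12(u_++u_-)$ to get
\[
p^n v_{\mathfrak k}=\tfrac12\bigl(u_+(p+1)^n+u_-(p-1)^n\bigr)
= v_{\mathfrak k}\cdot\tfrac{(p+1)^n+(p-1)^n}{2}+v_{\mathfrak p}\cdot\tfrac{(p+1)^n-(p-1)^n}{2}.
\]
Expanding $(p\pm 1)^n=\sum_{i=0}^n\binom{n}{i}(\pm1)^i p^{n-i}$, the half-sum keeps exactly the terms with $i$ even and the half-difference keeps exactly the terms with $i$ odd, which is precisely the asserted formula. (Alternatively one can prove the formula directly by induction on $n$, propagating it with Pascal's rule, but the $u_\pm$ substitution makes the recursion transparent.)

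For the ``furthermore'', write $A_n:=\sum_{0\le i\le n,\ i\text{ even}}\binom{n}{i}p^{n-i}$ and $B_n:=\sum_{0\le i\le n,\ i\text{ odd}}\binom{n}{i}p^{n-i}$, so the formula above reads $v_{\mathfrak p}B_n=p^n v_{\mathfrak k}-v_{\mathfrak k}A_n$. Since $v_{\mathfrak k}\in\g k$, we have $p^n v_{\mathfrak k}\in U(\g g)\g k$ and $v_{\mathfrak k}A_n\in\g k U(\g g)$, so $v_{\mathfrak p}B_n\in\cI$ for every $n\ge 1$. Now $B_n\in\C[p]$ has degree $n-1$ with leading coefficient $\binom{n}{1}=n\ne 0$, so $\{B_n\}_{n\ge 1}$ is a triangular basis of $\C[p]$; hence every $p^m$ is a linear combination of the $B_n$, and therefore $v_{\mathfrak p}p^m\in\cI$ for all $m\ge 0$, in particular for $0\le m\le n$.

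I do not expect a genuine obstacle here: everything reduces to elementary manipulations in $U(\g g)$ once one spots the substitution $u_\pm=v_{\mathfrak k}\pm v_{\mathfrak p}$, which converts the coupled recursion into two uncoupled ones. The one place that needs a moment's care is the last step: $\cI$ is only a coideal, not a one-sided ideal, so one cannot simply conclude $v_{\mathfrak p}p^m\in\cI$ from $v_{\mathfrak p}\in\cI$; the conclusion genuinely uses the explicit formula for $p^n v_{\mathfrak k}$ together with the triangularity of the family $\{B_n\}$.
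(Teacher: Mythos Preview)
Your argument is correct. The paper proves the displayed formula by direct induction on $n$, multiplying the $(n-1)$st formula on the left by $p$, using $pv_{\mathfrak k}=v_{\mathfrak k}p+v_{\mathfrak p}$ and $pv_{\mathfrak p}=v_{\mathfrak p}p+v_{\mathfrak k}$, and regrouping via Pascal's rule. Your route instead diagonalizes the coupled recursion by passing to $u_\pm=v_{\mathfrak k}\pm v_{\mathfrak p}$, which yields the closed form
\[
p^n v_{\mathfrak k}=v_{\mathfrak k}\cdot\tfrac{(p+1)^n+(p-1)^n}{2}+v_{\mathfrak p}\cdot\tfrac{(p+1)^n-(p-1)^n}{2}
\]
in one stroke; the binomial coefficients then drop out of the expansion. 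This is a genuine simplification: it explains \emph{why} those particular sums appear, and it avoids having to guess the shape of the answer before running the induction. For the ``furthermore'', your triangularity argument with the $B_n$ is essentially the same mechanism the paper uses (isolating the top-degree term of the $v_{\mathfrak p}$-sum and inducting on what remains); in fact your phrasing is slightly cleaner, since the paper speaks of isolating $v_{\mathfrak p}p^n$ from the right-hand side of the $p^n v_{\mathfrak k}$ formula, whereas the highest term there is actually $v_{\mathfrak p}p^{n-1}$.
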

\begin{proof}
We have $
pv_{\mathfrak k}=v_{\mathfrak k}p-[v_{\mathfrak k},p]=v_{\mathfrak k}p+v_{\mathfrak p}$, hence  $v_{\mathfrak p}\in \mathcal{I}$.
As $v\in \C^2$ is arbitrary, we obtain $\g p_{\bar{1}}\subseteq \mathcal{I}$.
Now suppose the assertions of the lemma hold for $n-1$. First, we assume that $n$ is odd (the argument for $n$ even  is analogous). Then 
\begin{align*}
p^{n}v_{\mathfrak k}
=p(p^{n-1}v_{\mathfrak k})
&=pv_{\mathfrak k} \sum_{\tiny
\begin{array}{cc}
0\leqslant i\leqslant n-1\\
i \, \text{is even}
\end{array}}{n-1 \choose i}p^{n-1-i}+pv_{\mathfrak p} \sum_{\tiny
\begin{array}{cc}
0\leqslant i\leqslant n-1\\
i \, \text{is odd}
\end{array}}{n-1 \choose i}p^{n-1-i}\\
&=
(v_{\mathfrak k}p+v_{\mathfrak p}) \sum_{\tiny
\begin{array}{cc}
0\leqslant i\leqslant n-1\\
i \, \text{is even}
\end{array}}{n-1 \choose i}p^{n-1-i}+(v_{\mathfrak p}p+v_{\mathfrak k}) \sum_{\tiny
\begin{array}{cc}
0\leqslant i\leqslant n-1\\
i \, \text{is odd}
\end{array}}{{n-1} \choose i}p^{n-1-i}\\
&=
v_{\mathfrak k}\left(
\sum_{\tiny
\begin{array}{cc}
0\leqslant i\leqslant n-1\\
i \, \text{is even}
\end{array}}{{n-1} \choose i}p^{n-i}+\sum_{\tiny
\begin{array}{cc}
0\leqslant i\leqslant n-1\\
i \, \text{is odd}
\end{array}}{n-1 \choose i}p^{n-1-i}\right)\\
&+
v_{\mathfrak p}\left(\sum_{\tiny
\begin{array}{cc}
0\leqslant i\leqslant n-1\\
i \, \text{is even}
\end{array}}{{n-1} \choose i}p^{n-1-i}+\sum_{\tiny
\begin{array}{cc}
0\leqslant i\leqslant n-1\\
i \, \text{is odd}
\end{array}}{n-1 \choose i}p^{n-i}\right).
\end{align*}
For the first term on the right hand side we have 
\begin{align*}
\sum_{\tiny
\begin{array}{cc}
0\leqslant i\leqslant n-1\\
i \, \text{is even}
\end{array}}{{n-1} \choose i}p^{n-i}
&+\sum_{\tiny
\begin{array}{cc}
0\leqslant i\leqslant n-1\\
i \, \text{is odd}
\end{array}}{n-1 \choose i}p^{n-1-i}\\
&=p^n+\sum_{
\tiny
\begin{array}{cc}
2\leqslant i\leqslant n-1\\
i \, \text{is even}
\end{array}}
\left({n-1 \choose i}+{n-1 \choose i-1}\right)p^{n-i}\\
&=p^n+\sum_{
\tiny
\begin{array}{cc}
2\leqslant i\leqslant n-1\\
i \, \text{is even}
\end{array}}
{n \choose i}p^{n-i}=\sum_{\tiny
\begin{array}{cc}
0\leqslant i\leqslant n-1\\
i \, \text{is even}
\end{array}}{n \choose i}p^{n-i}.
\end{align*}
By a similar argument, for the second term we obtain
\begin{align*}
\sum_{\tiny
\begin{array}{cc}
0\leqslant i\leqslant n-1\\
i \, \text{is odd}
\end{array}}{{n-1} \choose i}p^{n-1-i}+\sum_{\tiny
\begin{array}{cc}
0\leqslant i\leqslant n-1\\
i \, \text{is even}
\end{array}}{n-1 \choose i}p^{n-i}=\sum_{\tiny
\begin{array}{cc}
0\leqslant i\leqslant n\\
i \, \text{is odd}
\end{array}}{{n} \choose i}p^{n-i}.
\end{align*}
Consequently, 
\begin{align*}
p^nv_{\mathfrak k}=v_{\mathfrak k} \sum_{\tiny
\begin{array}{cc}
0\leqslant i\leqslant n\\
i \, \text{is even}
\end{array}}{n \choose i}p^{n-i}+v_{\mathfrak p} \sum_{\tiny
\begin{array}{cc}
0\leqslant i\leqslant n\\
i \, \text{is odd}
\end{array}}{n \choose i}p^{n-i},
\end{align*}
which proves the first assertion. 
From isolating $v_{\g p}p^n$ on the right hand side and using the induction hypothesis that  $v_{\mathfrak p} p^{m} \in \mathcal{I}$ for  $m \leqslant n-1$, it follows that  $v_{\mathfrak p} p^{n} \in \mathcal{I}$ as well.
\end{proof}
The commutator relations between $v_{\g p}$, $v_{\g k}$ and $p$ imply that for every $n\in\Z^{\geq 0}$ there exist unique polynomials $\alpha_n(x)$ and $\beta_n(x)$, independent of the choice of $v$, such that 
\begin{equation}
\label{eq:dfnab}
p^nv_{\mathfrak k}=v_{\mathfrak k}\alpha_{n}(p)+\beta_{n-1}(p)v_{\mathfrak p}.
\end{equation}
\begin{lem}\label{alpha(p)}
The following assertions hold.
\begin{itemize}
\item[\rm (i)] $\deg(\alpha_n)=n$, $\deg(\beta_{n-1})=n-1$, and the leading coefficients of $\alpha_n$ and $\beta_{n-1}$ are positive. 
\item[\rm (ii)]
$v_{\mathfrak p}p^n=\alpha_{n}(p)v_{\mathfrak p}-v_{\mathfrak k}\beta_{n-1}(p)$.
\item[\rm (iii)]
$\alpha_{n}(x)=x\alpha_{n-1}(x)-\sum_{i=1}^{n-1}a_{i}\beta_{i-1}(x)$ and $\beta_{n-1}(x)=x\beta_{n-2}(x)+\sum_{i=0}^{n-1}a_i\alpha_{i}(x)$,
where we assume $\alpha_{n-1}(x)=a_0+a_1x+\cdots +a_{n-1}x^{n-1}$.
\end{itemize}
\end{lem}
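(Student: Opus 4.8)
The plan is to prove (i), (ii) and (iii) together by a single induction on $n$, using only the four rewriting rules
\[
pv_{\g k}=v_{\g k}p+v_{\g p},\qquad pv_{\g p}=v_{\g p}p+v_{\g k},\qquad v_{\g k}p=pv_{\g k}-v_{\g p},\qquad v_{\g p}p=pv_{\g p}-v_{\g k}
\]
(all immediate from $[v_{\g k},p]=-v_{\g p}$ and $[v_{\g p},p]=-v_{\g k}$, together with the fact that a polynomial in $p$ commutes with $p$), and the existence and uniqueness of the decomposition~\eqref{eq:dfnab}, which is already in hand. The base data are $\alpha_0=1$, $\beta_{-1}=0$, $\alpha_1(x)=x$, $\beta_0(x)=1$; note that (ii) for $n=1$ is literally the rule $v_{\g p}p=pv_{\g p}-v_{\g k}$, and (i) holds for $n\le 1$.

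For the inductive step I would first derive the recursions in (iii). Starting from $p^{n}v_{\g k}=p\cdot(p^{n-1}v_{\g k})$, I would substitute $p^{n-1}v_{\g k}=v_{\g k}\alpha_{n-1}(p)+\beta_{n-2}(p)v_{\g p}$ from~\eqref{eq:dfnab} and move the outer $p$ past $v_{\g k}$ and $v_{\g p}$ with the rewriting rules; the two terms that arrive at $\beta_{n-2}(p)v_{\g k}$ cancel, leaving
\[
p^{n}v_{\g k}=v_{\g k}\,p\alpha_{n-1}(p)+p\beta_{n-2}(p)\,v_{\g p}+v_{\g p}\,\alpha_{n-1}(p).
\]
Writing $\alpha_{n-1}(x)=a_0+\cdots+a_{n-1}x^{n-1}$, the only summand not yet in the normal form of~\eqref{eq:dfnab} is $v_{\g p}\alpha_{n-1}(p)=\sum_{i=0}^{n-1}a_i\,v_{\g p}p^{i}$; here I would invoke assertion (ii) at the indices $i\le n-1$ to replace each $v_{\g p}p^{i}$ by $\alpha_i(p)v_{\g p}-v_{\g k}\beta_{i-1}(p)$ (the $i=0$ term contributing nothing to the $v_{\g k}$-part, since $\beta_{-1}=0$). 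Collecting the coefficient of $v_{\g k}$ on the left and of $v_{\g p}$ on the right and using uniqueness in~\eqref{eq:dfnab} yields exactly the two formulas of (iii).

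With (iii) available at index $n$, assertion (ii) at $n$ follows from a parallel computation: expand $v_{\g p}p^{n}=(v_{\g p}p^{n-1})p$, apply (ii) at $n-1$ to rewrite $v_{\g p}p^{n-1}$ as $\alpha_{n-1}(p)v_{\g p}-v_{\g k}\beta_{n-2}(p)$, push the trailing $p$ to the left, rewrite $\alpha_{n-1}(p)v_{\g k}$ via~\eqref{eq:dfnab} for indices $\le n-1$, and recognize the two polynomial coefficients that appear as $\alpha_n(p)$ and $\beta_{n-1}(p)$ by means of the recursions just proved. Assertion (i) at $n$ is then read off from those recursions: in $\alpha_n(x)=x\alpha_{n-1}(x)-\sum_{i=1}^{n-1}a_i\beta_{i-1}(x)$ the subtracted sum has degree $\le n-2$, so $\alpha_n$ stays monic of degree $n$; and in $\beta_{n-1}(x)=x\beta_{n-2}(x)+\sum_{i=0}^{n-1}a_i\alpha_i(x)$ the only degree-$(n-1)$ contributions come from $x\beta_{n-2}(x)$ and from $a_{n-1}\alpha_{n-1}(x)=\alpha_{n-1}(x)$, both of which have positive leading coefficient, so $\beta_{n-1}$ has degree $n-1$ with positive leading coefficient.

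The one structural point to respect is the mutual dependence of (ii) and (iii): the derivation of (iii) at level $n$ may only invoke (ii) at levels $\le n-1$, whereas (ii) at level $n$ then uses (iii) at level $n$, so each inductive step must prove (iii), then (ii), then (i), in that order. The only other thing requiring care is clerical --- tracking on which side of $v_{\g k}$ or $v_{\g p}$ a given polynomial in $p$ sits, since the two placements are genuinely inequivalent and one passes between them only through~\eqref{eq:dfnab} and assertion (ii). Beyond this bookkeeping I do not anticipate a conceptual obstacle.
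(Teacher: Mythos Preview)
Your proposal is correct and follows essentially the same inductive approach as the paper: both proofs peel off one factor of $p$ from $p^{n}v_{\g k}$ and from $v_{\g p}p^{n}$, rewrite using~\eqref{eq:dfnab} and assertion (ii) at lower indices, and then read off the recursions of (iii), from which (ii) and (i) follow. One small slip: in your expansion of $p\cdot(p^{n-1}v_{\g k})$ no terms of the form $\beta_{n-2}(p)v_{\g k}$ actually arise, so there is nothing to cancel---but the displayed formula you arrive at is correct regardless.
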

\begin{proof}
We use induction on $n$. For $n=1$ the assertions are  trivial and indeed we have $\alpha_1(x)=x$ and $\beta_0(x)=1$. Next assume that the assertions hold for all $n'\leq n$. Suppose that $\alpha_n(x)=a'_0+\cdots+a'_n x^n$. Then,
\begin{align}
\label{eq:vpn++1}
\notag
v_{\mathfrak p}p^{n+1}
&=
\alpha_{n}(p)v_{\mathfrak p}p-v_{\mathfrak k}\beta_{n-1}(p)p
=
\alpha_{n}(p)(pv_{\mathfrak p}-v_{\mathfrak k})-v_{\mathfrak k}\beta_{n-1}(p)p\\
\notag
&
=\alpha_{n}(p)pv_{\mathfrak p}-\alpha_{n}(p)v_{\mathfrak k}-v_{\mathfrak k}\beta_{n-1}(p)p
=
\alpha_{n}(p)pv_{\mathfrak p}-\sum_{i=0}^{n}a'_ip^i v_{\mathfrak k}-v_{\mathfrak k}\beta_{n-1}(p)p
\\
\notag 
&
=\alpha_{n}(p)pv_{\mathfrak p}-\sum_{i=0}^{n}a'_i(v_{\mathfrak k}\alpha_{i}(p)+\beta_{i-1}(p)v_{\mathfrak p}
)-v_{\mathfrak k}\beta_{n-1}(p)p\\
&=\left(p\alpha_n(p)-\sum_{i=1}^{n}a'_i\beta_{i-1}(p)\right)v_{\mathfrak p}-v_{\mathfrak k}\left(p\beta_{n-1}(p)+\sum_{i=0}^na'_i\alpha_{i}(p)\right).
\end{align}
But by the hypothesis of induction and straightforward computations we also have 
\begin{align}
\label{eq:pn+1}
\notag
p^{n+1}v_{\mathfrak k}
&=
pv_{\mathfrak k}\alpha_{n}(p)+p\beta_{n-1}(p)v_{\mathfrak p}
=
(v_{\mathfrak k}p+v_{\mathfrak p})\alpha_{n}(p)+p\beta_{n-1}(p)v_{\mathfrak p}\\
\notag
&=v_{\mathfrak k}p\alpha_{n}(p)+v_{\mathfrak p}\alpha_{n}(p)+p\beta_{n-1}(p)v_{\mathfrak p}\\
&=v_{\mathfrak k}p\alpha_{n}(p)+\sum_{i=0}^na'_i\alpha_{i}(p)v_{\mathfrak p}-v_{\mathfrak k}\sum_{i=1}^na'_i\beta_{i-1}(p)+p\beta_{n-1}(p)v_{\mathfrak p}\\
&=v_{\mathfrak k}\left(p\alpha_{n}(p)-\sum_{i=1}^na'_i\beta_{i-1}(p)\right)+\left(p\beta_{n-1}(p)+\sum_{i=0}^na'_i\alpha_{i}(p)\right)v_{\mathfrak p}.\notag
\end{align}
Comparing the right hand sides of~\eqref{eq:pn+1} and~\eqref{eq:dfnab} 
yields
\[
\alpha_{n+1}(x) = x\alpha_{n}(x) - \sum_{i=1}^n a'_i \beta_{i-1}(x)
\quad\text{and}\quad
\beta_{n}(x) = x\beta_{n-1}(x) + \sum_{i=0}^n a'_i \alpha_i(x).
\]
This proves (iii) and (i). 
Finally, (ii) follows from comparing the right hand sides of~\eqref{eq:vpn++1} and~\eqref{eq:pn+1}. 
\end{proof}

\begin{lem}\label{alpha(x)}
For $n\geq 1$ we have \[
\beta_{n-1}(x)=\sum_{\tiny
\begin{array}{cc}
0\leqslant i\leqslant n\\
i \, \text{is odd}
\end{array}}{n \choose i}\alpha_{n-i}(x),\]
and 
\begin{align*}
\alpha_{n}(x)=\sum_{\tiny
\begin{array}{cc}
0\leqslant i\leqslant n\\
i \, \text{is even}
\end{array}} {n \choose i}x^{n-i}-\sum_{\tiny
\begin{array}{cc}
0\leqslant i,j\leqslant n\\
i,j \, \text{are odd}
\end{array}} {n \choose i} {n-i \choose j}\alpha_{n-i-j}(x).
\end{align*}
\end{lem}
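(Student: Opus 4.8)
The plan is to derive both identities from the two recursions in Lemma~\ref{alpha(p)}(iii), combined with the closed formula for $p^n v_{\g k}$ from Lemma~\ref{supersymmetrygl(1,2)} and the defining relation \eqref{eq:dfnab}. For the formula for $\beta_{n-1}(x)$, I would start from the identity of Lemma~\ref{supersymmetrygl(1,2)},
\[
p^n v_{\mathfrak k}=v_{\mathfrak k} \sum_{\tiny
\begin{array}{cc}
0\leqslant i\leqslant n\\
i \, \text{is even}
\end{array}}{n \choose i}p^{n-i}+v_{\mathfrak p} \sum_{\tiny
\begin{array}{cc}
0\leqslant i\leqslant n\\
i \, \text{is odd}
\end{array}}{n \choose i}p^{n-i},
\]
and rewrite each $p^{n-i} v_{\mathfrak k}$ on the left — wait, more precisely, I would instead expand $p^{n-i}$ acting through $v_{\mathfrak k}$. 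The cleaner route: in \eqref{eq:dfnab} we have $p^m v_{\mathfrak k}=v_{\mathfrak k}\alpha_m(p)+\beta_{m-1}(p)v_{\mathfrak p}$, so comparing the $v_{\mathfrak p}$-component requires knowing how $p^{n-i}$ sits relative to $v_{\mathfrak p}$. Better yet: apply \eqref{eq:dfnab} with $m=n$ directly and separately re-expand the right-hand side of the Lemma~\ref{supersymmetrygl(1,2)} identity. Since $v_{\mathfrak k}p^{k}$ is already in the correct "$v_{\mathfrak k}\cdot(\text{polynomial})$" form, the even sum contributes $\sum_{i \text{ even}}\binom{n}{i}x^{n-i}$ to $\alpha_n$, while the odd sum has the shape $v_{\mathfrak p}\,q(p)$ with $q(x)=\sum_{i\text{ odd}}\binom{n}{i}x^{n-i}$; using Lemma~\ref{alpha(p)}(ii) to move each $p^{n-i}$ across $v_{\mathfrak p}$ gives $v_{\mathfrak p}p^{n-i}=\alpha_{n-i}(p)v_{\mathfrak p}-v_{\mathfrak k}\beta_{n-i-1}(p)$, and collecting the $v_{\mathfrak p}$-parts yields $\beta_{n-1}(x)=\sum_{i\text{ odd}}\binom{n}{i}\alpha_{n-i}(x)$.

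Then, for the formula for $\alpha_n(x)$, I would collect instead the $v_{\mathfrak k}$-component in the same comparison. The even-$i$ sum contributes $\sum_{i\text{ even}}\binom{n}{i}x^{n-i}$ directly. The odd-$i$ sum, after using Lemma~\ref{alpha(p)}(ii) to rewrite $v_{\mathfrak p}p^{n-i}=\alpha_{n-i}(p)v_{\mathfrak p}-v_{\mathfrak k}\beta_{n-i-1}(p)$, contributes $-\sum_{i\text{ odd}}\binom{n}{i}\beta_{n-i-1}(x)$ to the $v_{\mathfrak k}$-part. Substituting the just-proved formula $\beta_{n-i-1}(x)=\sum_{j\text{ odd}}\binom{n-i}{j}\alpha_{n-i-j}(x)$ gives
\[
\alpha_n(x)=\sum_{\tiny
\begin{array}{cc}
0\leqslant i\leqslant n\\
i \, \text{is even}
\end{array}} {n \choose i}x^{n-i}-\sum_{\tiny
\begin{array}{cc}
0\leqslant i,j\leqslant n\\
i,j \, \text{are odd}
\end{array}} {n \choose i} {n-i \choose j}\alpha_{n-i-j}(x),
\]
as desired. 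Alternatively, if extracting components from a single identity proves awkward because $v_{\mathfrak k}$ and $v_{\mathfrak p}$ are not literally linearly independent as operators, I would instead run a direct induction on $n$ using only Lemma~\ref{alpha(p)}(iii): express $\alpha_{n+1}$ and $\beta_n$ via the recursions, substitute the inductive formulas for $\alpha_i,\beta_{i-1}$ with $i\le n$, and simplify the resulting binomial sums using Pascal's rule $\binom{n}{i}+\binom{n}{i-1}=\binom{n+1}{i}$ — exactly the manipulation already carried out in the proof of Lemma~\ref{supersymmetrygl(1,2)}.

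The main obstacle I anticipate is the bookkeeping in the double sum: one must be careful that the ranges "$i,j$ odd" with $i+j\le n$ are handled correctly at the boundary (in particular the term $\alpha_0=1$, and whether $n-i-j$ can be negative — it cannot, since $i+j\le n$ is forced by $\binom{n-i}{j}$ vanishing otherwise). A secondary subtlety is justifying that one may "compare coefficients of $v_{\mathfrak k}$ and $v_{\mathfrak p}$" — this is legitimate because, by the PBW basis chosen earlier, for a generic (indeed any nonzero) $v$ the elements $v_{\mathfrak k}q(p)$ and $v_{\mathfrak p}q(p)$ with $q$ ranging over polynomials are linearly independent in $U(\g g)$ (they involve distinct PBW monomials: $v_{\mathfrak k}$ is a linear combination of $e',f'$ whereas $v_{\mathfrak p}$ involves $e,f$), so the decomposition \eqref{eq:dfnab} is genuinely unique and coefficient extraction is valid. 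Once these points are nailed down, the rest is the routine Pascal-rule algebra already modeled in Lemma~\ref{supersymmetrygl(1,2)}.
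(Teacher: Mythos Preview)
Your proposal is correct and follows essentially the same approach as the paper: expand $p^n v_{\mathfrak k}$ via Lemma~\ref{supersymmetrygl(1,2)}, rewrite each $v_{\mathfrak p}p^{n-i}$ using Lemma~\ref{alpha(p)}(ii), and then compare the $v_{\mathfrak k}$- and $v_{\mathfrak p}$-components with the defining relation~\eqref{eq:dfnab}. The paper does exactly this (without pausing to justify the coefficient extraction, which you correctly note is valid by PBW), and your substitution of the $\beta_{n-i-1}$ formula into the $v_{\mathfrak k}$-component to obtain the second identity is precisely how the paper's displayed computation concludes.
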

\begin{proof}
 By Lemma \ref{supersymmetrygl(1,2)} and Lemma \ref{alpha(p)}(ii) we have
\begin{align*}
p^nv_{\mathfrak k}&
= \sum_{\tiny
\begin{array}{cc}
0\leqslant i\leqslant n\\
i \, \text{is even}
\end{array}}{n \choose i}
v_{\mathfrak k}
p^{n-i}+ \sum_{\tiny
\begin{array}{cc}
0\leqslant i\leqslant n\\
i \, \text{is odd}
\end{array}}{n \choose i}v_{\mathfrak p}p^{n-i}\\
&=\sum_{\tiny
\begin{array}{cc}
0\leqslant i\leqslant n\\
i \, \text{is even}
\end{array}}{n \choose i}
v_{\mathfrak k} 
p^{n-i}+ \sum_{\tiny
\begin{array}{cc}
0\leqslant i\leqslant n\\
i \, \text{is odd}
\end{array}}{n \choose i}(\alpha_{n-i}(p)v_{\mathfrak p}-v_{\mathfrak k}\beta_{n-i-1}(p))\\
&=v_{\mathfrak k} \left( \sum_{\tiny
\begin{array}{cc}
0\leqslant i\leqslant n\\
i \, \text{is even}
\end{array}}{n \choose i}p^{n-i}-\sum_{\tiny
\begin{array}{cc}
0\leqslant i\leqslant n\\
i \, \text{is odd}
\end{array}}{n \choose i}\beta_{n-i-1}(p))\right)+
\sum_{\tiny
\begin{array}{cc}
0\leqslant i\leqslant n\\
i \, \text{is odd}
\end{array}}{n \choose i}\alpha_{n-i}(p)v_{\mathfrak p}.
\end{align*}
The assertions of the lemma now follow from comparing the right hand side of the above calculation and the assumption $p^nv_{\mathfrak k}=v_{\mathfrak k}\alpha_{n}(p)+\beta_{n-1}(p)v_{\mathfrak p}$.
\end{proof}

\begin{prp}
\label{prp:alphaa}
We have $\alpha_{n}(x)=\sum_{k=0}^{\lfloor \frac n2\rfloor } E_{2k}{n \choose 2k}x^{n-2k}$ for $n\geq 1$.
\end{prp}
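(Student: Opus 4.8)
The plan is to prove the closed formula $\alpha_n(x)=\sum_{k=0}^{\lfloor n/2\rfloor} E_{2k}\binom{n}{2k}x^{n-2k}$ by strong induction on $n$, feeding the claimed formula into the recursion supplied by Lemma~\ref{alpha(x)}. First I would record the base case: from Lemma~\ref{alpha(p)}(iii) (or the $n=1$ computation in its proof) we have $\alpha_1(x)=x$, which matches the formula since $E_0=1$. For the inductive step, I assume the formula holds for all $\alpha_{n'}$ with $n'<n$ and substitute these expressions into the second identity of Lemma~\ref{alpha(x)}, namely
\[
\alpha_n(x)=\sum_{\substack{0\le i\le n\\ i\text{ even}}}\binom{n}{i}x^{n-i}-\sum_{\substack{0\le i,j\le n\\ i,j\text{ odd}}}\binom{n}{i}\binom{n-i}{j}\alpha_{n-i-j}(x).
\]
The key point is that on the right-hand side only $\alpha_{n-i-j}$ with $i,j\ge 1$ appears, so $n-i-j\le n-2<n$ and the induction hypothesis applies.

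The heart of the argument is then a purely combinatorial identity: after substituting the closed form for each $\alpha_{n-i-j}(x)$ and collecting the coefficient of $x^{n-2k}$ for each $k$, I must show this coefficient equals $E_{2k}\binom{n}{2k}$. Extracting the coefficient of $x^{n-2k}$ gives
\[
\binom{n}{2k}\;-\;\sum_{\substack{i,j\ge 1\text{ odd}\\ i+j+2\ell=2k}}\binom{n}{i}\binom{n-i}{j}E_{2\ell}\binom{n-i-j}{2\ell},
\]
and using the multinomial rearrangement $\binom{n}{i}\binom{n-i}{j}\binom{n-i-j}{2\ell}=\binom{n}{2k}\binom{2k}{i}\binom{2k-i}{j}$ (valid since $i+j+2\ell=2k$), I can factor out $\binom{n}{2k}$. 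So the desired identity reduces to
\[
E_{2k}=1-\sum_{\substack{i,j\ge 1\text{ odd}\\ i+j\le 2k}}\binom{2k}{i}\binom{2k-i}{j}E_{2k-i-j},
\]
i.e. $\sum_{i,j\ge 0\text{ odd}}\binom{2k}{i}\binom{2k-i}{j}E_{2k-i-j}=1$ where the $i=j=0$ term contributes $E_{2k}$ and we interpret $E_{\text{odd}}=0$. This is exactly the recurrence for the Euler zigzag numbers coming from $\mathrm{sech}(x)\cdot\cosh(x)=1$: writing $\cosh(x)=\sum_{i\text{ even}}x^i/i!$ and, after pairing the two odd sums, recognizing $\sum_{m\text{ even}}\binom{2k}{m}(\text{number of ways to split }m\text{ into two odd parts, signed})$. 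Cleanly, I would instead verify the generating-function identity directly: the claim $\alpha_n(x)=\sum_k E_{2k}\binom{n}{2k}x^{n-2k}$ is equivalent to
\[
\sum_{n\ge 0}\alpha_n(x)\frac{t^n}{n!}=\big(\operatorname{sech} t\big)\,e^{xt},
\]
so I would transcribe the recursion of Lemma~\ref{alpha(x)} into generating-function form and check that $F(x,t):=(\operatorname{sech} t)e^{xt}$ satisfies it, using that $\sum_{j\text{ odd}}t^j/j!=\sinh t$ and $\sum_{i\text{ even}}t^i/i!=\cosh t$.

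Concretely, the first sum in Lemma~\ref{alpha(x)} has exponential generating function $(\cosh t)e^{xt}$, and the double sum becomes $(\sinh t)^2$ times the EGF of $\alpha_\bullet(x)$; so the recursion translates to $G(x,t)=(\cosh t)e^{xt}-(\sinh t)^2 G(x,t)$ where $G(x,t)=\sum_n \alpha_n(x)t^n/n!$. Solving, $G(x,t)=\dfrac{(\cosh t)e^{xt}}{1+\sinh^2 t}=\dfrac{(\cosh t)e^{xt}}{\cosh^2 t}=(\operatorname{sech} t)e^{xt}$, which is precisely $F(x,t)$; reading off the coefficient of $t^n/n!$ and then of $x^{n-2k}$ gives $E_{2k}\binom{n}{2k}$ as claimed. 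The main obstacle I anticipate is bookkeeping: making sure the index ranges ``$i$ odd, $j$ odd, $i+j\le n$'' in Lemma~\ref{alpha(x)} are faithfully matched by $\sinh^2 t=\sum_{i,j\text{ odd}}t^{i+j}/(i!\,j!)$, and confirming that Lemma~\ref{alpha(x)} indeed holds with no boundary correction at small $n$ (in particular that the $\alpha_0$ term, if it arises, is handled consistently with $\alpha_0(x)=1$). Once the generating-function reformulation is in place, the rest is a one-line computation.
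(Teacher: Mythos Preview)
Your proposal is correct and follows essentially the same route as the paper: both feed Lemma~\ref{alpha(x)} into an exponential-generating-function computation and arrive at the identity $(\cosh t)-(\sinh t)^2\cdot(\text{answer})=(\text{answer})$, hence $\operatorname{sech} t$. The only cosmetic difference is that the paper first argues $\alpha_n$ contains only powers $x^{n-2k}$, names the unknown coefficients $\bar E_{2k}$, and derives a one-variable recursion for them before passing to generating functions, whereas you work directly with the bivariate EGF $G(x,t)=\sum_n\alpha_n(x)t^n/n!$; your version is a mild streamlining of the same argument.
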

\begin{proof}
Using Lemma~\ref{alpha(x)}, it follows  from a simple induction on $n$ that the only powers of $x$ that occur in $\alpha_n(x)$ and in $\beta_n(x)$ are $x^{n-2k}$ for $k\in\Z^{\geq 0}$. Now suppose $\bar E_n\in\C$ are chosen such that 
\[
\alpha_{n}(x)=\sum_{k=0}^{\lfloor \frac n2 \rfloor} \bar{E}_{2k}{n \choose 2k}x^{n-2k}.
\]
Our goal is to prove that $\bar{E}_{2i}=E_{2i}$ for all $i\geq 0$.
From Lemma~\ref{alpha(x)} it follows that 
\begin{align*}
\sum_{k=0}^n \bar{E}_{2k}{n \choose 2k}x^{n-2k}=\sum_{\tiny
\begin{array}{cc}
0\leqslant i\leqslant n\\
i \, \text{is even}
\end{array}} {n \choose i}x^{n-i}-\sum_{\tiny
\begin{array}{cc}
0\leqslant i,j\leqslant n\\
i,j \, \text{are odd}
\end{array}} {n \choose i} {n-i \choose j}\alpha_{n-i-j}(x).
\end{align*}
We adopt the convention that \({\bar E}_k = 0\) for \(k < 0\). 
By comparing the coefficient of $x^{n-2k}$ on both sides we obtain
\begin{align*}
\bar{E}_{2k}{n \choose 2k}={n \choose 2k}-\sum_{\tiny
\begin{array}{cc}
0\leqslant i,j\leqslant n\\
i,j \, \text{are odd}
\end{array}} {n \choose i,j,n-2k,2k-i-j}\bar{E}_{2k-i-j}.
\end{align*}
We can also remove the constraint $i,j \leqslant n$ and write
\begin{align*}
\bar{E}_{2k}=1-\sum_{\tiny
\begin{array}{cc}
0\leqslant i,j\\
i,j \, \text{are odd}
\end{array}} {2k \choose i,j,2k-i-j}\bar{E}_{2k-i-j}.
\end{align*}
 Dividing both sides by $(2k)!$, we obtain
\begin{align*}
\frac{\bar{E}_{2k}}{(2k)!}=\frac{1}{(2k)!}-\sum_{\tiny
\begin{array}{cc}
0\leqslant i,j \\
i,j \, \text{are odd}
\end{array}} \frac{1}{i! j!}\frac{\bar{E}_{2k-i-j}}{(2k-i-j)!}.
\end{align*}
Then, by substituting $i=2A
+1$ and $j=2B+1$, we find that
\begin{align*}
\frac{\bar{E}_{2k}}{(2k)!}=\frac{1}{(2k)!}-\sum_{\tiny
\begin{array}{c}
0\leqslant A,B
\end{array}} \frac{1}{(2A+1)! (2B+1)!}\frac{\bar{E}_{2k-2A-2B-2}}{(2k-2A-2B-2)!}.
\end{align*}
Thus,  
\[
\sum_{k \geqslant 0}\frac{\bar{E}_{2k}}{(2k)!}x^{2k} = \sum_{k \geq 0}\frac{x^{2k}}{(2k)!} - \left(\sum_{A \geq 0} \frac{x^{2A+1}}{(2A+1)!}\right)\left(\sum_{B \geq 0} \frac{x^{2B+1}}{(2B+1)!}\right)\sum_{r \geqslant 0}\frac{\bar{E}_{2r}}{(2r)!}x^{2r}.
\]
But we have
\begin{align*}
\cosh(x)=\sum_{k \geq 0}\frac{x^{2k}}{(2k)!}, \quad \sinh(x)=\sum_{k \geq 0} \frac{x^{2k+1}}{(2k+1)!}.
\end{align*}
Therefore,
\[
\sum_{k \geqslant 0}\frac{\bar{E}_{2k}}{(2k)!}x^{2k} =
\left(\frac{e^{x} + e^{-x}}{2}\right) -
\left(\frac{e^{x} - e^{-x}}{2i}\right)^2 \sum_{r \geqslant 0}\frac{\bar{E}_{2r}}{(2r)!}x^{2r},
\]
which implies that
$
\sum_{ k \geqslant 0}\frac{\bar{E}_{2k}}{(2k)!}x^{2k}=
\frac{2}{e^{x}+e^{-x}}$.
The right hand side of the above equation is $\text{sech}(x)$. Since $\text{sech}(x)=\sec(ix)$, the Taylor series of $\text{sech}(x)$ is 
$\sum_{k \geqslant 0}\frac{E_{2k}}{(2k)!}x^{2k}$. 
This proves that  $\bar{E}_{2k}=E_{2k}$.
\end{proof}
\begin{prp}\label{beta(x)}
For $n\geq 1 $ we have \begin{align*}
\beta_{n-1}(x)=\sum_{k=0}^{\lfloor \frac{n-1}{2}\rfloor} \frac{2^{2k+2}(2^{2k+2}-1)B_{2k+2}}{2k+2}
{n \choose n-1-2k}x^{n-1-2k}.
\end{align*}
\end{prp}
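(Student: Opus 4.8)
The plan is to feed into each other the two closed forms already established: Lemma~\ref{alpha(x)}, which writes $\beta_{n-1}(x)=\sum_{0\le i\le n,\ i\ \mathrm{odd}}\binom{n}{i}\alpha_{n-i}(x)$, and Proposition~\ref{prp:alphaa}, which gives $\alpha_m(x)=\sum_{k}E_{2k}\binom{m}{2k}x^{m-2k}$; then recognize the coefficients of the resulting polynomial through an exponential generating function built out of $\text{sech}$ and $\sinh$, and finally translate Euler zigzag numbers into Bernoulli numbers via the formula relating $A_{2m-1}$ and $B_{2m}$ that was recalled in the text.

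First I would substitute Proposition~\ref{prp:alphaa} into Lemma~\ref{alpha(x)} to obtain
\[
\beta_{n-1}(x)=\sum_{\substack{0\le i\le n\\ i\ \mathrm{odd}}}\ \sum_{k\ge 0}\binom{n}{i}\binom{n-i}{2k}E_{2k}\,x^{n-i-2k}.
\]
Since $i$ is odd, every exponent $n-i-2k$ is congruent to $n-1$ modulo $2$, so I fix $\ell\ge 0$ and collect the coefficient of $x^{n-1-2\ell}$. Writing $i=2(\ell-k)+1$ and using the trinomial identity $\binom{n}{i}\binom{n-i}{2k}=\frac{n!}{i!\,(2k)!\,(n-i-2k)!}$, this coefficient equals $\frac{n!}{(n-1-2\ell)!}\,c_\ell$, where
\[
c_\ell:=\sum_{k=0}^{\ell}\frac{E_{2k}}{(2k)!\,(2(\ell-k)+1)!}.
\]
A short parity and range check shows that $i$ is automatically odd and lies in $\{0,\dots,n\}$ exactly for $0\le k\le\ell$ and $0\le\ell\le\lfloor(n-1)/2\rfloor$, which matches the index set in the statement.

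Next I would identify $\sum_{\ell\ge 0}c_\ell x^{2\ell+1}$ as the Cauchy product of $\text{sech}(x)=\sum_k\frac{E_{2k}}{(2k)!}x^{2k}$ with $\sinh(x)=\sum_j\frac{x^{2j+1}}{(2j+1)!}$, hence equal to $\text{sech}(x)\sinh(x)=\tanh(x)$. On the other hand, the odd part of André's formula is $\tan(x)=\sum_{m\ge 1}\frac{A_{2m-1}}{(2m-1)!}x^{2m-1}$, so $\tanh(x)=-i\tan(ix)=\sum_{\ell\ge 0}\frac{(-1)^\ell A_{2\ell+1}}{(2\ell+1)!}x^{2\ell+1}$. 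Comparing coefficients gives $c_\ell=\frac{(-1)^\ell A_{2\ell+1}}{(2\ell+1)!}$, and the identity $A_{2m-1}=\frac{(-1)^{m-1}2^{2m}(2^{2m}-1)}{2m}B_{2m}$ with $m=\ell+1$ turns this into $c_\ell=\frac{2^{2\ell+2}(2^{2\ell+2}-1)B_{2\ell+2}}{(2\ell+2)!}$. Substituting back and using $\frac{n!}{(n-1-2\ell)!\,(2\ell+2)!}=\frac{1}{2\ell+2}\binom{n}{n-1-2\ell}$ yields precisely the claimed coefficient of $x^{n-1-2\ell}$; renaming $\ell$ to $k$ gives the formula.

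The whole argument is essentially mechanical once it is set up; the only mildly delicate step is the bookkeeping in the middle — verifying which pairs $(i,k)$ land on a given power of $x$, checking that the oddness and range constraints on $i$ translate exactly into $0\le k\le\ell\le\lfloor(n-1)/2\rfloor$, and making sure no boundary term (for instance the constant term when $n$ is odd, which arises from the $i=n$ summand) is dropped. Everything after the generating-function identification is a one-line rearrangement of factorials and binomial coefficients.
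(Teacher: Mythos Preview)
Your proof is correct and follows essentially the same route as the paper: both substitute the closed form for $\alpha_m$ from Proposition~\ref{prp:alphaa} into the first formula of Lemma~\ref{alpha(x)}, recognize the resulting coefficient generating function as $\text{sech}(x)\sinh(x)=\tanh(x)$, and read off the Bernoulli numbers from the Taylor expansion of $\tanh$. The only cosmetic differences are that the paper sets up an ansatz with unknowns $\bar B_{2k+2}$ and verifies $\bar B=B$, whereas you compute the coefficients directly, and you route through the zigzag numbers $A_{2\ell+1}$ via $\tanh(x)=-i\tan(ix)$ rather than quoting the $\tanh$ series in terms of $B_{2k+2}$ outright.
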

\begin{proof}
The powers of $x$ that occur in $\beta_{n-1}(x)$ are $x^{n-1-2k}$ for $k\in\Z^{\geq 0}$. Assume that for some  $\bar{B}_i\in\C$ we have 
\begin{align*}
\beta_{n-1}(x)=\sum_{k=0}^{\lfloor \frac{n-1}{2}\rfloor} \frac{2^{2k+2}(2^{2k+2}-1)\bar{B}_{2k+2}}{2k+2}
{n \choose n-1-2k}x^{n-1-2k}.
\end{align*}
Then by Lemma \ref{alpha(x)} and Proposition~\ref{prp:alphaa} we obtain
\begin{align*}
\sum_{k=0}^{\lfloor \frac{n-1}{2}\rfloor} \frac{2^{2k+2}(2^{2k+2}-1)\bar{B}_{2k+2}}{2k+2}
{n \choose n-1-2k}x^{n-1-2k}&=\sum_{\tiny
\begin{array}{cc}
0\leqslant j\leqslant n\\
j \, \text{is odd}
\end{array}} {n \choose j}\alpha_{n-j}(x)\\
&=\sum_{\tiny
\begin{array}{cc}
0\leqslant j\leqslant n\\
j \, \text{is odd}
\end{array}}\sum_{r=0}^{n-j} {n \choose j}{n-j \choose 2r}E_{2r}x^{n-j-2r}.
\end{align*}
By comparing the coefficient of $x^{n-1-2k}$ on both sides we obtain
\begin{align*}
&\frac{2^{2k+2}(2^{2k+2}-1)\bar{B}_{2k+2}}{2k+2}
{n \choose n-1-2k}
=\sum_{\tiny
\begin{array}{cc}
0\leqslant j\leqslant n\\
j \, \text{is odd}
\end{array}}\sum_{k=j-1}^{n-j}{n \choose j,2k-j+1,n-2k-1}E_{2k+1-j}.
\end{align*}
Dividing both sides by $(2k+1)!{n\choose n-1-2k}$ and simplifying the binomial coefficient, we obtain
\begin{align*}
\frac{2^{2k+2}(2^{2k+2}-1)\bar{B}_{2k+2}}{(2k+2)!}=
\sum_{\tiny
\begin{array}{cc}
0\leqslant j\leqslant n\\
j \, \text{is odd}
\end{array}}\sum_{k=j-1}^{n-j}\frac{E_{2k+1-j}}{j!(2k+1-j)!}.
\end{align*}
Then, by the substitiution $j=2A
+1$ we obtain 
\begin{align*}
\sum_{k\geqslant 0}\frac{2^{2k+2}(2^{2k+2}-1)\bar{B}_{2k+2}}{(2k+2)!}x^{2k+1}=\sum_{A\geqslant 0}\frac{x^{2A+1}}{(2A+1)!}\sum_{r\geqslant 0}\frac{E_{2r}}{(2r)!}x^{2r}
\end{align*}
But we have
$\text{sech}(x)=\sum_{r\geqslant 0}\frac{E_{2r}}{(2r)!}x^{2r} $ and $\sinh(x)=\sum_{k \geq 0} \frac{x^{2k+1}}{(2k+1)!}$.
Thus,
\begin{align*}
\sum_{k\geqslant 0}\frac{2^{2k+2}(2^{2k+2}-1)\bar{B}_{2k+2}}{(2k+2)!}x^{2k+1}
=\frac{e^x-e^{-x}}{2}\frac{2}{e^x+e^{-x}}=
\frac{e^x-e^{-x}}{e^x+e^{-x}}.
\end{align*}
The right hand side of the above equation is the hyperbolic tangent function $\text{tanh}(x)$, with Taylor series 
\[
\text{tanh}(x)=\sum_{k\geqslant 0}\frac{2^{2k+2}(2^{2k+2}-1)B_{2k+2}}{(2k+2)!}x^{2k+1}.
\]
It follows immediately that $\bar{B}_{2k+2}=B_{2k+2}$.
\end{proof}

\begin{lem}\label{firstpart}
For every 
\[k_0=\begin{bmatrix}
0&0&0\\
0&\alpha&\beta\\
0&\gamma&-\alpha
\end{bmatrix}
\in \g k_{\bar{0}},
\] the following relations hold in $U(\g g)$:
\begin{itemize}
\item[\rm (i)] 
$p^nk_0=k_0p^n$.
\item[\rm (ii)]
$p^nek_0=k_0p^ne+p^n(\alpha e+\beta f)$.
\item[\rm (iii)] $p^nfk_0=k_0p^nf+p^n(\gamma e-\alpha f)$.
\item[\rm (iv)] $p^nefk_0=k_0p^nef+\beta k_1 p^n-\gamma k_2 p^n$.
\end{itemize}
\end{lem}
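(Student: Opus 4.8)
The plan is to reduce all four identities to a handful of commutation relations in $\g g$ together with the observation that $p$ commutes with $\g k_{\bar 0}$ and with $k_1$ and $k_2$; each identity then follows from a short straightening computation in $U(\g g)$ followed by left multiplication by $p^n$.

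First I would record the relations I need. Since $p=\diag(2,1,1)$ acts as the identity on the lower right $2\times 2$ block of $\gl(1|2)$, and $k_0\in\g k_{\bar 0}$ as well as $k_1$ and $k_2$ are supported on that block (their first row and column vanish), we get $[p,k_0]=[p,k_1]=[p,k_2]=0$ in $\g g$, hence
\[
p^nk_0=k_0p^n,\qquad p^nk_1=k_1p^n,\qquad p^nk_2=k_2p^n
\]
in $U(\g g)$; in particular this is already (i). Next, a direct matrix computation, made transparent by noting $e=(1,0)_{\g p}$ and $f=(0,1)_{\g p}$ in the notation introduced earlier in this section, gives
\[
[e,k_0]=\alpha e+\beta f
\qquad\text{and}\qquad
[f,k_0]=\gamma e-\alpha f.
\]
Finally, since $e$ and $f$ are odd, the defining relations of $U(\g g)$ give $e^2=\tfrac12[e,e]$ and $f^2=\tfrac12[f,f]$, and computing the matrix products shows $[e,e]=-2k_2$ and $[f,f]=2k_1$, so that $e^2=-k_2$ and $f^2=k_1$ in $U(\g g)$.

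For (ii) I would write $ek_0=k_0e+\alpha e+\beta f$ using the first bracket relation, multiply on the left by $p^n$, and use $p^nk_0=k_0p^n$ to obtain $p^nek_0=k_0p^ne+p^n(\alpha e+\beta f)$; part (iii) is identical with the second bracket relation in place of the first. For (iv) I would compute $efk_0$ in $U(\g g)$ in two steps: first $efk_0=e(k_0f+\gamma e-\alpha f)=(ek_0)f+\gamma e^2-\alpha ef$, and then $(ek_0)f=(k_0e+\alpha e+\beta f)f=k_0ef+\alpha ef+\beta f^2$; substituting $e^2=-k_2$ and $f^2=k_1$ and cancelling the two $\alpha ef$ terms yields $efk_0=k_0ef+\beta k_1-\gamma k_2$. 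Multiplying on the left by $p^n$ and moving $p^n$ past $k_0$, $k_1$ and $k_2$ gives (iv).

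I do not expect a genuine obstacle here: the lemma is a finite chain of matrix computations together with the straightening relations of $U(\g g)$. The only step that demands care is the bookkeeping in (iv) — using the super-commutator relations correctly, in particular the fact that the squares of the odd elements $e$ and $f$ are (up to sign) the even elements $k_2$ and $k_1$, and checking that the two occurrences of $\alpha ef$ cancel.
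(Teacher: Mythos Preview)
Your proof is correct and follows essentially the same approach as the paper: both arguments record $[p,k_0]=0$, the brackets $[e,k_0]=\alpha e+\beta f$, $[f,k_0]=\gamma e-\alpha f$, and $e^2=-k_2$, $f^2=k_1$, and then straighten $p^nek_0$, $p^nfk_0$, $p^nefk_0$ step by step. The only cosmetic difference is that in (iv) you first compute $efk_0$ and then left-multiply by $p^n$, whereas the paper carries the factor $p^n$ throughout; the computations are identical.
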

\begin{proof}
By straightforward calculations one can see that
\begin{align*}
 &p^nk_0=k_0p^n\in \g k U(\g g),\\
 &p^nek_0=p^nk_0e+p^n(\alpha e+\beta f)=k_0p^ne+p^n(\alpha e+\beta f),\\
 &p^nfk_0=p^nk_0f+p^n(\gamma e-\alpha f)=k_0p^nf+p^n(\gamma e-\alpha f),
\end{align*}
and
\begin{align*}
 p^nefk_0&=p^ne(k_0f+\gamma e-\alpha f)=p^nek_0f+\gamma p^ne^2-\alpha p^n ef\\
&=p^n(k_0e+(\alpha e+\beta f))f+\gamma p^ne^2-\alpha p^n ef\\
&=p^nk_0ef+\alpha p^nef+\beta p^nf^2+\gamma p^ne^2-\alpha p^n ef\\
&=k_0p^nef+\frac{1}{2}\beta p^n[f,f]+\frac{1}{2}\gamma p^n[e,e]=k_0p^nef+\beta p^nk_1-\gamma p^nk_2=k_0p^nef+\beta k_1 p^n-\gamma k_2 p^n.
\end{align*}
This completes the proof.\end{proof}

\begin{lem}\label{Second part}
Let $v=(a,b)\in\C^2$. Then the following relations hold in $U(\g g)$:
\begin{itemize}
\item[\rm (i)]
$p^nv_{\mathfrak k}=v_{\mathfrak k}\alpha_{n}(p)+\beta_{n-1}(p)v_{\mathfrak p}$.
\item[\rm (ii)]
$p^nev_{\mathfrak k}=-v_{\mathfrak k}\alpha_{n}(p)e+ak_2\beta_{n-1}(p)-bk\beta_{n-1}(p)+b\beta_{n-1}(p)ef-bp^{n+1}$.

\item[\rm (iii)]
$ p^nfv_{\mathfrak k}=-v_{\mathfrak k}\alpha_{n}(p)f-a\beta_{n-1}(p)ef-bk_1\beta_{n-1}(p)+ap^{n+1}$.
\item[\rm (iv)]
$
p^nefv_{\mathfrak k}=D-bk_1\beta_{n-1}(p)e-b\beta_{n-1}(p)f+p^{n+1}v_{\mathfrak p}-ae'\alpha_{n}(p)-\beta_{n-1}(p)e$, 
where \[
D:=v_{\mathfrak k}\alpha_{n}(p)ef-ak_2\beta_{n-1}(p)+bk\beta_{n-1}(p)f.
\]
\end{itemize}
\end{lem}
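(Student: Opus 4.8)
The plan is to handle the four parts as follows. Part~(i) needs nothing new: it is precisely the first assertion of Lemma~\ref{supersymmetrygl(1,2)} rephrased through the definition~\eqref{eq:dfnab} of the polynomials $\alpha_n$, $\beta_{n-1}$. For the remaining three parts I would reduce everything to~(i) by pushing the odd factors $e$, $f$, and the product $ef$ past $v_{\g k}$, using a short dictionary of (anti)commutators in $\gl(1|2)$ that are each verified by a one-line matrix computation:
\[
[e,v_{\g k}]=-bp,\qquad [f,v_{\g k}]=ap,\qquad e^2=-k_2,\qquad f^2=k_1,\qquad ef+fe=k,\qquad ek_1=k_1e+f,
\]
together with $[p,k]=[p,k_1]=[p,k_2]=0$ and $[p,e]=e'$, $[p,e']=e$. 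Inside $U(\g g)$ these give the rewriting rules $ev_{\g k}=-bp-v_{\g k}e$, $fv_{\g k}=ap-v_{\g k}f$, $v_{\g p}e=-ak_2+bk-bef$ and $v_{\g p}f=aef+bk_1$, and they let any polynomial in $p$ slide through $k$, $k_1$, $k_2$.

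To prove~(ii) I would write $p^nev_{\g k}=p^n(-bp-v_{\g k}e)=-bp^{n+1}-(p^nv_{\g k})e$, insert~(i) for $p^nv_{\g k}$, expand $v_{\g p}e$, and commute $\beta_{n-1}(p)$ past $k$ and $k_2$; this produces exactly the right-hand side of~(ii). Part~(iii) is the same argument with $e$ replaced by $f$ throughout: start from $fv_{\g k}=ap-v_{\g k}f$ and expand $v_{\g p}f$ instead.

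Part~(iv) is the substantial one. I would first combine the rewriting rules to get
\[
efv_{\g k}=e(fv_{\g k})=aep-(ev_{\g k})f=aep+bpf+v_{\g k}\,ef,
\]
then multiply by $p^n$ and insert~(i) for the factor $p^nv_{\g k}$. This leaves three kinds of residual terms. The term $bp^{n+1}f$ is already in final shape. The term $\beta_{n-1}(p)\,v_{\g p}\,ef$ is normalized by expanding $v_{\g p}\,ef$ with $e^2=-k_2$, $fe=k-ef$, $f^2=k_1$, $ek_1=k_1e+f$ and then sliding $\beta_{n-1}(p)$ through $k$, $k_1$, $k_2$. Finally the term $p^n ep$ is handled by $ep=pe-e'$, giving $p^n ep=p^{n+1}e-p^ne'$; here the key point is that the pair $\{e',e\}$ carries exactly the same $\ad_p$-action as $\{v_{\g k},v_{\g p}\}$ (namely $pe'=e'p+e$, $pe=ep+e'$), so the same recursion underlying~\eqref{eq:dfnab} yields $p^ne'=e'\alpha_n(p)+\beta_{n-1}(p)e$. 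Substituting this and collecting $ap^{n+1}e+bp^{n+1}f=p^{n+1}v_{\g p}$ assembles the right-hand side of~(iv).

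The main obstacle will be the bookkeeping in~(iv): although every individual manipulation is elementary, the first pass leaves many terms that are not yet in normal form, so one has to be careful about the order in which the clean-up relations are applied — in particular using the $\alpha_n$, $\beta_{n-1}$ recursion to reduce $p^ne'$, and keeping track of which side of a polynomial in $p$ the elements $v_{\g p}$, $k_1$ and $k_2$ end up on. Parts~(i)--(iii) are comparatively short.
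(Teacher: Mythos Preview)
Your proposal is correct and follows essentially the same route as the paper. Both arguments reduce (ii)--(iv) to (i) by first pushing the odd factors past $v_{\g k}$ via $ev_{\g k}=-v_{\g k}e-bp$ and $fv_{\g k}=-v_{\g k}f+ap$, then expanding $v_{\g p}e$, $v_{\g p}f$, $v_{\g p}ef$ with the relations $e^2=-k_2$, $f^2=k_1$, $fe=k-ef$, $ek_1=k_1e+f$; and for (iv) both use $ep=pe-e'$ together with the instance of~\eqref{eq:dfnab} at $v=(1,0)$ to rewrite $p^ne'=e'\alpha_n(p)+\beta_{n-1}(p)e$.
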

\begin{proof}
Part (i) is just the defining relation~\eqref{eq:dfnab}. By straightforward calculations we have
\begin{align*}
[e,v_{\mathfrak k}]&=-bp, \quad [f,v_{\mathfrak k}]=ap,\quad [e,e]=-2k_2,\quad [f,f]=2k_1,\quad [e,p]=-e',\quad [e',p]=-e,\\
[e',e']&=2k_2,\quad [f',f']=-2k_1,\quad [p,k_1]=[p,k_2]=[p,k]=0,\quad [e,k_1]=f.
\end{align*}
Therefore, by Lemmas \ref{alpha(x)} and \ref{beta(x)} we obtain
\begin{align*}
p^nev_{\mathfrak k}
&=p^n(-v_{\mathfrak k}e-bp)=-p^nv_{\mathfrak k}e-bp^{n+1}=-v_{\mathfrak k}\alpha_{n}(p)e-\beta_{n-1}(p)v_{\mathfrak p}e-bp^{n+1}\\
&=-v_{\mathfrak k}\alpha_{n}(p)e-\beta_{n-1}(p)(ae+bf)e-bp^{n+1}\\
&=-v_{\mathfrak k}\alpha_{n}(p)e-a\beta_{n-1}(p)e^2-b\beta_{n-1}(p)(k-ef)-bp^{n+1}\\
&=-v_{\mathfrak k}\alpha_{n}(p)e+a\beta_{n-1}(p)k_2-b\beta_{n-1}(p)k+b\beta_{n-1}(p)ef-bp^{n+1}\\
&=-v_{\mathfrak k}\alpha_{n}(p)e+ak_2\beta_{n-1}(p)-bk\beta_{n-1}(p)+b\beta_{n-1}(p)ef-bp^{n+1}.
\end{align*}
This proves (ii). Similarly, 
\begin{align*}
  p^nfv_{\mathfrak k}&
=p^n(-v_{\mathfrak k}f+ap)=-p^nv_{\mathfrak k}f+a
p^{n+1}=-v_{\mathfrak k}\alpha_{n}(p)f-\beta_{n-1}(p)v_{\mathfrak p}f+ap^{n+1}\\
&=-v_{\mathfrak k}\alpha_{n}(p)f-\beta_{n-1}(p)(ae+bf)f+ap^{n+1}\\
&=-v_{\mathfrak k}\alpha_{n}(p)f-a\beta_{n-1}(p)ef-b\beta_{n-1}(p)f^2+ap^{n+1}\\
&=-v_{\mathfrak k}\alpha_{n}(p)f-a\beta_{n-1}(p)ef-b\beta_{n-1}(p)k_1+ap^{n+1}\\
&=-v_{\mathfrak k}\alpha_{n}(p)f-a\beta_{n-1}(p)ef-bk_1\beta_{n-1}(p)+ap^{n+1}.
\end{align*}
This proves (iii). For (iv) first note that 
\begin{align}
\label{eq:soim}
\notag
 p^nefv_{\mathfrak k}&=p^n\left(v_{\mathfrak k}ef+e[v_{\mathfrak k},f]-[v_{\mathfrak k},e]f\right)=p^nv_{\mathfrak k}ef+p^ne(ap)+bp^{n+1}f\\
&=p^nv_{\mathfrak k}ef+ap^n(pe-e')+bp^{n+1}f=p^nv_{\mathfrak k}ef+ap^{n+1}e-ap^ne'+bp^{n+1}f\\
&=p^nv_{\mathfrak k}ef+p^{n+1}v_{\mathfrak p}-ap^ne'.
\notag
\end{align}
From~\eqref{eq:dfnab} we also have
$ap^ne'=ae'\alpha_{n}(p)+\beta_{n-1}(p)e$.
Thus, the right hand side of~\eqref{eq:soim} can be simplified accordingly, as follows: 
\begin{align*}
p^nefv_{\mathfrak k}&=v_{\mathfrak k}\alpha_{n}(p)ef+\beta_{n-1}(p)v_{\mathfrak p}ef+p^{n+1}v_{\mathfrak p}-ae'\alpha_{n}(p)-\beta_{n-1}(p)e\\
&=v_{\mathfrak k}\alpha_{n}(p)ef+\beta_{n-1}(p)(ae+bf)ef+p^{n+1}v_{\mathfrak p}-ae'\alpha_{n}(p)-\beta_{n-1}(p)e\\
&=v_{\mathfrak k}\alpha_{n}(p)ef+a\beta_{n-1}(p)e^2+b\beta_{n-1}(p)fef+p^{n+1}v_{\mathfrak p}-ae'\alpha_{n}(p)-\beta_{n-1}(p)e\\
&=v_{\mathfrak k}\alpha_{n}(p)ef-a\beta_{n-1}(p)k_2+b\beta_{n-1}(p)(k-ef)f+p^{n+1}v_{\mathfrak p}-ae'\alpha_{n}(p)-\beta_{n-1}(p)e\\
&=D-b\beta_{n-1}(p)ef^2+p^{n+1}v_{\mathfrak p}-ae'\alpha_{n}(p)-\beta_{n-1}(p)e\\
&=D-b\beta_{n-1}(p)ek_1+p^{n+1}v_{\mathfrak p}-ae'\alpha_{n}(p)-\beta_{n-1}(p)e\\
&=D-b\beta_{n-1}(p)(k_1e+f)+p^{n+1}v_{\mathfrak p}-ae'\alpha_{n}(p)-\beta_{n-1}(p)e\\
&=D-bk_1\beta_{n-1}(p)e-b\beta_{n-1}(p)f+p^{n+1}v_{\mathfrak p}-ae'\alpha_{n}(p)-\beta_{n-1}(p)e.\qedhere
\end{align*}
\end{proof}
\begin{theorem}\label{basis of I}
The following vectors in $U(\g g)$ form  a basis of $\cI$:

\begin{itemize}
\item[\rm (i)] $z^mp^ne$ and $z^mp^nf$ where $m,n\geqslant 0$,
\item[\rm (ii)] $z^m\beta_{n-1}(p)ef-z^mp^{n+1}$ where $m,n\geqslant 0$, with the convention $\beta_{-1}(x):=0$,\item[\rm (iii)] $z^mk^{r_1}k_1^{r_2}k_2^{r_3}(e')^{r_4}(f')^{r_5}p^{s_1}e^{s_2}f^{s_3}$ where $m,r_1,r_2,r_3,s_1\geq 0$, $r_4,r_5,s_2,s_3\in\{0,1\}$, and 
 at least one of $r_1,\cdots , r_5\geq 1$.
\end{itemize}\end{theorem}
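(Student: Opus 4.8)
Organize everything by the PBW basis of $U(\g g)$ displayed above. Let $\mathcal J_1$, $\mathcal J_2$, $\mathcal J_3$ be the $\C$-spans of the vectors listed in (i), (ii), (iii) respectively, and put $\mathcal J:=\mathcal J_1+\mathcal J_2+\mathcal J_3$. The plan is to prove $\mathcal J\sseq\cI$, then $\cI\sseq\mathcal J$, and finally that the union of the three families is linearly independent; together these give the theorem. Two facts are used throughout: $z$ is central in $U(\g g)$, so each $\mathcal J_i$ is stable under multiplication by $z^m$; and $\{\beta_{n-1}\}_{n\geq1}$ is a basis of $\C[x]$, since by Lemma~\ref{alpha(p)}(i) these polynomials have degrees $0,1,2,\dots$ with nonzero leading coefficients.

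For $\mathcal J\sseq\cI$: after pulling $z^m$ through, each vector in (iii) has the form $w\cdot u$ with $w\in\{k,k_1,k_2,e',f'\}\sseq\g k$ and $u\in U(\g g)$, so $\mathcal J_3\sseq\g kU(\g g)\sseq\cI$. By Lemma~\ref{Second part}(i), $\beta_{n-1}(p)v_{\g p}=p^nv_{\g k}-v_{\g k}\alpha_n(p)\in U(\g g)\g k+\g kU(\g g)=\cI$ for all $n\geq1$ and all $v\in\C^2$; since the $\beta_{n-1}$ span $\C[x]$ this gives $p^je,p^jf\in\cI$, hence $\mathcal J_1\sseq\cI$. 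Specializing $v=(0,1)$ in Lemma~\ref{Second part}(ii) (so $v_{\g k}=f'$, $a=0$, $b=1$) yields, for each $n\geq0$, the identity $\beta_{n-1}(p)ef-p^{n+1}=p^nef'+f'\alpha_n(p)e+k\,\beta_{n-1}(p)$, whose right-hand side lies in $U(\g g)\g k+\g kU(\g g)=\cI$; multiplying by $z^m$ gives $\mathcal J_2\sseq\cI$ (the case $n=0$ recovers $-z^mp\in\cI$, matching the convention $\beta_{-1}:=0$).

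For $\cI\sseq\mathcal J$, first establish $\g kU(\g g)=\mathcal J_3$. Given $w\in\{k,k_1,k_2,e',f'\}$ and a PBW monomial $M=z^tk^{r_1}k_1^{r_2}k_2^{r_3}(e')^{r_4}(f')^{r_5}p^{s_1}e^{s_2}f^{s_3}$, move $z^t$ to the left and straighten $w\cdot k^{r_1}k_1^{r_2}k_2^{r_3}(e')^{r_4}(f')^{r_5}\in U(\g k)$ into the PBW basis of $U(\g k)$; since $\eps(w)=0$ every resulting term has zero counit, hence has a positive exponent among $k,k_1,k_2,e',f'$, so $wM\in\mathcal J_3$; the reverse inclusion is clear. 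Next, for $U(\g g)\g k$: if $M$ has some $r_i\geq1$ then $M\in\g kU(\g g)$, hence $Mw\in\g kU(\g g)=\mathcal J_3$; otherwise $M=z^tp^{s_1}e^{s_2}f^{s_3}$ with $s_2,s_3\in\{0,1\}$, and $Mw=z^t\bigl((p^{s_1}e^{s_2}f^{s_3})w\bigr)$. Now Lemma~\ref{firstpart} (for $w=k_0\in\{k,k_1,k_2\}$, after writing $k_0$ with the appropriate entries $\alpha,\beta,\gamma$) and Lemma~\ref{Second part} (for $w=v_{\g k}\in\{e',f'\}$, i.e. $v\in\{(1,0),(0,1)\}$) express each product $(p^{s_1}e^{s_2}f^{s_3})w$, with $s_2,s_3\in\{0,1\}$, as an element of $\g kU(\g g)$ plus a $\C$-linear combination of the vectors $p^je$, $p^jf$ and $\beta_{j-1}(p)ef-p^{j+1}$. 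Multiplying by $z^t$ and using $\g kU(\g g)=\mathcal J_3$ gives $Mw\in\mathcal J_3+\mathcal J_1+\mathcal J_2=\mathcal J$. Hence $\cI=\g kU(\g g)+U(\g g)\g k\sseq\mathcal J$, so $\cI=\mathcal J$.

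It remains to see that (i)$\cup$(ii)$\cup$(iii) is linearly independent. The PBW monomials occurring in family (iii) have some $r_i\geq1$; those in (i) have all $r_i=0$ and $(s_2,s_3)\in\{(1,0),(0,1)\}$; those in (ii) have all $r_i=0$ and $(s_2,s_3)\in\{(0,0),(1,1)\}$. These three monomial sets are pairwise disjoint, so a vanishing linear combination of the union splits into vanishing combinations within each family. Families (i) and (iii) consist of distinct PBW monomials. Within (ii), for fixed $m$ the coefficient of $ef$ in $\sum_n c_{m,n}\bigl(z^m\beta_{n-1}(p)ef-z^mp^{n+1}\bigr)$ is $z^m\bigl(\sum_{n\geq1}c_{m,n}\beta_{n-1}(p)\bigr)$, which forces $c_{m,n}=0$ for $n\geq1$; the residual term $-c_{m,0}z^mp$ then forces $c_{m,0}=0$. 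This completes the proof. The one delicate point is the claim that each $(p^{s_1}e^{s_2}f^{s_3})w$ reduces, modulo $\g kU(\g g)$, to a combination of the specific vectors $p^je$, $p^jf$, $\beta_{j-1}(p)ef-p^{j+1}$ rather than to an arbitrary element of $\spn\{p^j,\,p^je,\,p^jf,\,p^jef:j\geq0\}$; this is exactly what the recursions for $\alpha_n$ and $\beta_n$ (and their closed forms in Propositions~\ref{prp:alphaa} and~\ref{beta(x)}) together with the identities of Lemmas~\ref{firstpart} and~\ref{Second part} are engineered to guarantee, and the rest is routine bookkeeping with the PBW basis.
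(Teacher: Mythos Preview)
Your proof is correct and follows essentially the same three-step strategy as the paper (inclusion $\mathcal J\sseq\cI$, reverse inclusion via Lemmas~\ref{firstpart} and~\ref{Second part}, and linear independence via the PBW basis), with only cosmetic differences: you invoke Lemma~\ref{Second part}(i) together with the fact that $\{\beta_{n-1}\}_{n\ge1}$ is a basis of $\C[x]$ to get $\mathcal J_1\sseq\cI$ (the paper uses Lemma~\ref{firstpart}(ii)), you use part (ii) rather than (iii) of Lemma~\ref{Second part} for $\mathcal J_2\sseq\cI$, and you make the identification $\g kU(\g g)=\mathcal J_3$ explicit via the counit. Your remark that the closed forms in Propositions~\ref{prp:alphaa} and~\ref{beta(x)} are needed is inaccurate---only the identities of Lemmas~\ref{firstpart} and~\ref{Second part} are used---but this does not affect the argument.
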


\begin{proof}
We remark that $z$ is in the centre of $U(\g g)$.
Let $\mathcal B_\mathrm{(i)},\mathcal B_\mathrm{(ii)}$
and 
$\mathcal B_\mathrm{(iii)}$ denote the sets of vectors defined in (i)-(iii) above, respectively. Then from Lemma~\ref{firstpart}(ii) it follows that $\mathcal B_\mathrm{(i)}\sseq \cI$. From Lemma~\ref{Second part}(iii) it follows that $\mathcal B_\mathrm{(ii)}\sseq \cI$. It is also clear that $\mathcal B_\mathrm{(iii)}\sseq\cI$.  

To prove that $\mathcal B_\mathrm{(i)}\cup\mathcal B_\mathrm{(ii)}\cup\mathcal B_\mathrm{(iii)}$ spans $\cI$, note that elements of $U(\g g)\g k$ that are not in $\g kU(\g g)$ are linear combinations of the
 monomials in the PBW basis that, modulo a factor of $z^m$,  appear on  the left hand sides of the relations in Lemmas~\ref{firstpart} and~\ref{Second part}. 
From these lemmas it follows that the latter monomials are in the span of $\mathcal B_\mathrm{(i)}\cup\mathcal B_\mathrm{(ii)}\cup\mathcal B_\mathrm{(iii)}$.

Finally, we prove that the vectors
in $\mathcal B_\mathrm{(i)}\cup\mathcal B_\mathrm{(ii)}\cup\mathcal B_\mathrm{(iii)}$
 are linearly independent. Consider any  linear dependence relation between the elements of 
$\mathcal B_\mathrm{(i)}\cup\mathcal B_\mathrm{(ii)}\cup\mathcal B_\mathrm{(iii)}$.  
By organizing the dependence relation according to the powers of $z$, and factoring out the common power $z^m$, we can assume that in all of the monomials $m=0$. 
  Let $c_n$ denote the coefficient of $\beta_{n-1}(p)ef-p^{n+1}$ for $n\geq 0$, and set $N:=\max\{n:c_n\neq 0\}$. Recall that $\deg(\beta_{N-1})=N-1$. Now with the PBW basis of $U(\g g)$ in mind, note that the summand $c_Np^{N-1}ef$ cannot get canceled in the linear dependence relation. This is a contradiction. 
\end{proof}

\bibliographystyle{plain}
\bibliography{References.bib}

\end{document}